\pgfplotsset{compat=1.17}
\title[On accumulated spectrograms for Gabor frames]{On accumulated spectrograms\\ for Gabor frames}
\author{Simon Halvdansson}
\address{Department of Mathematical Sciences, Norwegian University of Science and Technology, 7491 Trondheim, Norway.}
\email{simon.halvdansson@ntnu.no}
\date{\monthyeardate\today}
\newtheorem*{rep@theorem}{\rep@title}
\newcommand{\newreptheorem}[2]{%
	\newenvironment{rep#1}[1]{%
		\def\rep@title{#2 \ref{##1}}%
		\begin{rep@theorem}}%
		{\end{rep@theorem}}}
\newtheorem*{rep@corollary}{\rep@title} %
\newcommand{\newrepcorollary}[2]{%
	\newenvironment{rep#1}[1]{%
		\def\rep@title{#2 \ref{##1}}%
		\begin{rep@corollary}}%
		{\end{rep@corollary}}}
\theoremstyle{plain}
\newtheorem{theorem}{Theorem}[section]
\newtheorem*{theorem*}{Theorem}
\newtheorem{lemma}[theorem]{Lemma}
\newtheorem{proposition}[theorem]{Proposition}
\newtheorem{corollary}[theorem]{Corollary}
\theoremstyle{definition}
\theoremstyle{remark}
\newtheorem*{remark}{Remark}
\newcommand{\R}{\mathbb{R}}
\newcommand{\N}{\mathbb{N}}
\newcommand{\supp}{\operatorname{supp}}
\newcommand{\tr}{\operatorname{tr}}
\newcommand{\vast}{\bBigg@{4}}
\newcommand{\Vast}{\bBigg@{5}}
\DeclareFontFamily{U}{mathx}{\hyphenchar\font45}
\DeclareFontShape{U}{mathx}{m}{n}{
	<5> <6> <7> <8> <9> <10>
	<10.95> <12> <14.4> <17.28> <20.74> <24.88>
	mathx10
}{}
\DeclareSymbolFont{mathx}{U}{mathx}{m}{n}
\DeclareMathAccent{\widecheck}{0}{mathx}{"71}
\DeclareMathAccent{\wideparen}{0}{mathx}{"75}
\def\XXint#1#2#3{{\setbox0=\hbox{$#1{#2#3}{\int}$ }
		\vcenter{\hbox{$#2#3$ }}\kern-.6\wd0}}
\begin{document}
    \maketitle
    \begin{abstract}\vspace{-9mm}
        Analogs of classical results on accumulated spectrograms, the sum of spectrograms of eigenfunctions of localization operators, are established for Gabor multipliers. We show that the lattice $\ell^1$ distance between the accumulated spectrogram and the indicator function of the Gabor multiplier mask is bounded by the number of lattice points near the boundary of the mask and that this bound is sharp in general. The methods developed for the proofs are also used to show that the Weyl-Heisenberg ensemble restricted to a lattice is hyperuniform when the Gabor frame is tight.
        
        \vspace{3mm}
    \end{abstract}
    
    \renewcommand{\thefootnote}{\fnsymbol{footnote}}
    \footnotetext{\emph{Keywords:} Accumulated spectrogram, Gabor multiplier, Localization operator, Gabor frame, Weyl-Heisenberg ensemble}
    \renewcommand{\thefootnote}{\arabic{footnote}}
    
    \section{Introduction and main results}
    In time-frequency analysis, localization operators restrict a signal $f$ to a subset $\Omega$ of the time-frequency plane \cite{daubechies1988_loc} by means of a restricted resolution of the identity via the short-time Fourier transform (STFT) $V_g f$ as
    \begin{align*}
        A_\Omega^g f = \int_{\Omega} V_g f(z) \pi(z)g\,dz,\qquad V_gf(z) = \langle f, \pi(z) g \rangle
    \end{align*}
    where $g \in L^2(\R^d)$ is a \emph{window function}, $z = (x, \omega) \in \R^{2d}$ is a point in \emph{time-frequency space} and $\pi(z)f(t) = \pi(x, \omega) f(t) = e^{2\pi i \omega \cdot t} f(t-x)$ is a \emph{time-frequency shift} \cite{grochenig_book}. The spectral behavior of such operators has been studied extensively \cite{Feichtinger2001, DEMARI2002, Heil1994, Ramanathan1994, Feichtinger2014, Abreu2012, Marceca2023}, showing that there are approximately $|\Omega|$ eigenvalues close to $1$, followed by a \emph{plunge region} of size comparable to the length of the perimeter of $\Omega$, after which the remaining eigenvalues are close to $0$. In \cite{Abreu2015}, Abreu, Gröchenig and Romero showed that $\Omega$ can be estimated from only the spectrograms of the first $\lceil|\Omega|\rceil$ eigenfunctions using the \emph{accumulated spectrogram}
	\begin{align}\label{eq:cont_acc_spec}
            \rho_\Omega(z) = \sum_{k=1}^{\lceil |\Omega| \rceil} |V_g h_k^\Omega(z)|^2
	\end{align}
        where $(h_k^\Omega)_{k=1}^\infty$ are the eigenfunctions of the localization operator. That result was eventually improved by a sharp estimate in \cite{Abreu2017_sharp} to
	\begin{align}\label{eq:cont_acc_spec_bound}
            \Vert \rho_\Omega - \chi_\Omega \Vert_{L^1(\R^{2d})} \leq C_g |\partial \Omega|
	\end{align}
	where $\chi_\Omega$ is the indicator function of $\Omega$ and $C_g$ is a constant depending only on $g$.
	
        While these results have been numerically verified and used in the discrete setting \cite{Abreu2015, Abreu2017, Halvdansson2023_symbol, Dorfler2024, Dorfler2002}, there have been no proofs that corresponding results hold for Gabor multipliers, the discrete version of localization operators. It is the goal of this article to fill this gap by establishing versions of the main results of \cite{Abreu2015, Abreu2017_sharp} which are valid in the setting of Gabor frames. Before stating the results, we establish some notation and conventions.
	
        A \emph{Gabor frame} is a collection $\{\pi(\lambda) g\}_{\lambda \in \Lambda}$, induced by the pair $(g, \Lambda)$, where $g \in L^2(\R^d)$ is the window function and $\Lambda \subset \R^{2d}$ is a lattice, satisfying the inequalities
	\begin{align*}
            A\Vert f \Vert_{L^2}^2 \leq \sum_{\lambda \in \Lambda} |\langle f, \pi(\lambda) g \rangle|^2 \leq B\Vert f \Vert_{L^2}^2 \qquad \text{ for all }f \in L^2(\R^d)
	\end{align*}
        for a pair of \emph{frame bounds} $A, B > 0$ \cite{grochenig_book}. For a given Gabor frame there always exists a \emph{dual window} $h$ such that the reconstruction formula
	\begin{align*}
	       f = \sum_{\lambda \in \Lambda} V_g f(\lambda) \pi(\lambda) h
	\end{align*}
        holds for $f \in L^2(\R^d)$ in the $L^2$ sense. When $A=B$ the frame is said to be \emph{tight} and the dual window is a scalar multiple of $g$. The analog of localization operators in this setting, \emph{Gabor multipliers} \cite{Feichtinger2003}, are then constructed by restricting the above formula to a subset $\Omega \subset \R^{2d}$ as
	\begin{align}\label{eq:gabor_multiplier_def}
            G_{\Omega, \Lambda}^g f = \sum_{\lambda \in \Lambda} \chi_\Omega(\lambda) V_g f(\lambda) \pi(\lambda) g \qquad \text{ for }f \in L^2(\R^d).
	\end{align}
        While the tight $A=B$ situation is preferable, we will throughout the article allow for non-tight frames but still take the Gabor multipliers to be defined as in the formula above with $h = g$. This makes the operator self-adjoint and together with a compact mask $\Omega$ this guarantees that $G_{\Omega, \Lambda}^g$ is both a compact and self-adjoint operator whose eigendecomposition can be written as
	\begin{align*}
	   G_{\Omega, \Lambda}^g = \sum_{k=1}^\infty \lambda_k^\Omega (h_k^\Omega \otimes h_k^\Omega)
	\end{align*}
        where $(h_k^\Omega)_{k=1}^\infty$ is an orthonormal basis of $L^2(\R^d)$, $(h_k^\Omega \otimes h_k^\Omega)(f) = \langle f, h_k^\Omega \rangle h_k^\Omega$ is a rank-one projection operator \cite{Simon2005} and $(\lambda_k^\Omega)_{k=1}^\infty$ are the eigenvalues. The accumulated spectrogram on $\Lambda$ is then defined, analogously to \eqref{eq:cont_acc_spec}, as
        \begin{align*}
            \rho_\Omega(\lambda) = \frac{1}{\Vert g \Vert_{L^2}^2}\sum_{k=1}^{A_\Omega} |V_g h_k^\Omega(\lambda)|^2,\qquad A_\Omega = \left\lceil \frac{\#(\Omega \cap \Lambda) \Vert g \Vert_{L^2}^2}{B} \right\rceil
	\end{align*}
        since it is at most the first $A_\Omega$ eigenvalues which are close to the upper frame bound $B$ in this setting \cite{Feichtinger2024}. For technical reasons, all our main results will require the window function $g$ to belong to the space
	\begin{align*}
            M^*_\Lambda(\R^d) = \left\{ g \in L^2(\R^d) : \Vert g \Vert_{M^*_\Lambda} = \left(\sum_{\lambda \in \Lambda} |\lambda| |V_gg(\lambda)|^2\right)^{1/2} < \infty \right\}.
	\end{align*}
        By \cite[Proposition 2.1]{Feichtinger2015}, $M_\Lambda^*(\R)$ can be embedded in $L^2(\R) \cap M^{4/3}_{v_{1/2}}(\R)$ where $M^p_{v_s}(\R)$ is the weighted modulation space with weight function $v_s(z) = (1 + |z|)^s$ \cite{grochenig_book}.
    
        The boundary measure $|\partial \Omega|$ used in \eqref{eq:cont_acc_spec_bound} cannot be used in the discrete setting in general apart from when $d=1$ due to some pathological counterexamples. Details of this and how we can circumvent the problem by assuming that $\Omega$ has \emph{maximally Ahlfors regular boundary} are discussed in Section \ref{sec:bdry_measure}. To state our results generally, we need the lattice-dependent boundary measure
        \begin{align*}
            \partial^r_\Lambda \Omega = \Lambda \cap (\partial\Omega + B(0,r)).
        \end{align*}
	With it, we are ready to state our main results, the first of which is analogous to the sharp growth bound \eqref{eq:cont_acc_spec_bound}.
	\begin{theorem}\label{theorem:main_l1}
            Let $g \in M_\Lambda^*(\R^d)$ and $\Lambda$ be such that $(g, \Lambda)$ induces a frame with frame constants $A, B > 0$, $r > 0$ and $\Omega \subset \R^{2d}$ be compact. Then there exists a constant $C$ depending only on $r$ and $d$ such that
		\begin{align*}
                \Vert \rho_\Omega - \chi_\Omega \Vert_{\ell^1(\Lambda)} &\leq C\left(\Vert g \Vert_{M_\Lambda^*}^2 + 1\right) \# \partial^{r_\Lambda}_\Lambda \Omega + 2\frac{B-A}{B}\#(\Omega \cap \Lambda) + \frac{B}{\Vert g \Vert_{L^2}^2}
		\end{align*}
		where $r_\Lambda = r + l_M$ and $l_M$ is the diameter of the fundamental domain of $\Lambda$.
	\end{theorem}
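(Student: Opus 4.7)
The plan is to decompose $\Vert \rho_\Omega - \chi_\Omega \Vert_{\ell^1(\Lambda)}$ by comparing to an auxiliary Berezin-type symbol, splitting into a ``spectral'' piece and a ``symbol'' piece. Since $\rho_\Omega(\lambda) = \Vert P_{A_\Omega} \pi(\lambda) g \Vert_{L^2}^2 / \Vert g \Vert_{L^2}^2$ for $P_{A_\Omega}$ the orthogonal projection onto the span of the top $A_\Omega$ eigenfunctions of $G_{\Omega,\Lambda}^g$, one has $0 \leq \rho_\Omega(\lambda) \leq 1$. I would introduce
\[
\tilde\rho_\Omega(\lambda) = \frac{\langle \pi(\lambda)g,\, G_{\Omega,\Lambda}^g \pi(\lambda)g \rangle}{B \Vert g \Vert_{L^2}^2} = \frac{1}{B \Vert g \Vert_{L^2}^2} \sum_{\mu \in \Omega \cap \Lambda} |V_g g(\mu - \lambda)|^2,
\]
which replaces $P_{A_\Omega}$ by the normalized Gabor multiplier $G_{\Omega,\Lambda}^g/B$, and split $\Vert \rho_\Omega - \chi_\Omega \Vert_{\ell^1(\Lambda)} \leq \Vert \rho_\Omega - \tilde\rho_\Omega \Vert_{\ell^1(\Lambda)} + \Vert \tilde\rho_\Omega - \chi_\Omega \Vert_{\ell^1(\Lambda)}$ by the triangle inequality.

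The symbol piece is direct: using the frame inequalities $A \Vert g \Vert_{L^2}^2 \leq \sum_{\mu \in \Lambda} |V_g g(\mu - \lambda)|^2 \leq B \Vert g \Vert_{L^2}^2$ (applied to $\pi(\lambda)g$) together with the symmetry $|V_g g(-\nu)| = |V_g g(\nu)|$ gives
\[
\Vert \tilde\rho_\Omega - \chi_\Omega \Vert_{\ell^1(\Lambda)} \leq \frac{B - A}{B}\, \#(\Omega \cap \Lambda) + \frac{2 \mathcal{B}}{B \Vert g \Vert_{L^2}^2},
\]
where $\mathcal{B} := \sum_{\lambda \in \Omega \cap \Lambda,\, \mu \in \Lambda \setminus \Omega} |V_g g(\mu - \lambda)|^2$ is the ``boundary sum''. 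For the spectral piece, the pointwise difference is $\langle v_\lambda, (P_{A_\Omega} - G_{\Omega,\Lambda}^g/B) v_\lambda \rangle$ with $v_\lambda = \pi(\lambda) g / \Vert g \Vert_{L^2}$; summing absolute values and using the upper frame bound $S_\Lambda \leq B \cdot I$ on the frame operator $S_\Lambda = \sum_\lambda \pi(\lambda)g \otimes \pi(\lambda)g$ yields
\[
\Vert \rho_\Omega - \tilde\rho_\Omega \Vert_{\ell^1(\Lambda)} \leq \frac{B}{\Vert g \Vert_{L^2}^2}\, \Vert P_{A_\Omega} - G_{\Omega,\Lambda}^g / B \Vert_{S_1}.
\]
Setting $\tilde\lambda_k = \lambda_k^\Omega / B \in [0,1]$, the trace norm equals $\sum_{k \leq A_\Omega}(1 - \tilde\lambda_k) + \sum_{k > A_\Omega} \tilde\lambda_k$; the inequality $\min(x, 1-x) \leq 2 x(1-x)$ combined with the rounding estimate $|A_\Omega - \tr(G_{\Omega,\Lambda}^g)/B| \leq 1$ controls this by $4 \mathscr{P} + 1$, where $\mathscr{P} = \sum_k \tilde\lambda_k(1 - \tilde\lambda_k)$ is the plunge measure. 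Expanding $B^2 \mathscr{P} = B \tr(G_{\Omega,\Lambda}^g) - \tr((G_{\Omega,\Lambda}^g)^2)$ via $\tr((G_{\Omega,\Lambda}^g)^2) = \sum_{\lambda, \mu \in \Omega \cap \Lambda} |V_g g(\mu - \lambda)|^2$ bounds $\mathscr{P}$ by $\bigl((B - A) \Vert g \Vert_{L^2}^2 \#(\Omega \cap \Lambda) + \mathcal{B}\bigr)/B^2$.

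The main obstacle is a sharp bound on the boundary sum, namely $\mathcal{B} \leq C_d \Vert g \Vert_{M_\Lambda^*}^2\, \#\partial^{r_\Lambda}_\Lambda \Omega$, a discrete analog of the BV-estimate $|\Omega \setminus (\Omega - \nu)| \leq |\nu| \cdot |\partial \Omega|$ that underlies the continuous argument in \cite{Abreu2017_sharp}. Writing $\mathcal{B} = \sum_{\nu \in \Lambda} |V_g g(\nu)|^2 N(\nu)$ with $N(\nu) = \#\{\lambda \in \Omega \cap \Lambda : \lambda + \nu \notin \Omega\}$, the key step is $N(\nu) \leq C_d |\nu|\, \#\partial^{r_\Lambda}_\Lambda \Omega$, which I would establish by a lattice path argument: for each $\nu \in \Lambda$, fix a path $\lambda = \lambda_0, \lambda_1, \ldots, \lambda_k = \lambda + \nu$ in $\Lambda$ whose steps $\lambda_j - \lambda_{j-1}$ are lattice generators $\pm e_i$ of length at most $l_M$, with $k \leq C_d |\nu| / l_M$. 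Telescoping $\chi_\Omega(\lambda + \nu) - \chi_\Omega(\lambda) = \sum_{j=1}^k (\chi_\Omega(\lambda_j) - \chi_\Omega(\lambda_{j-1}))$, applying the triangle inequality, and using shift-invariance of $\sum_{\lambda \in \Lambda}$ gives
\[
\sum_{\lambda \in \Lambda} |\chi_\Omega(\lambda) - \chi_\Omega(\lambda + \nu)| \leq k \cdot \max_i \#\{\lambda \in \Lambda : \chi_\Omega(\lambda) \neq \chi_\Omega(\lambda + e_i)\} \leq C_d |\nu|\, \#\partial^{l_M}_\Lambda \Omega,
\]
since any lattice point where $\chi_\Omega$ flips under a step of length $\leq l_M$ must lie within distance $l_M$ of $\partial \Omega$. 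Because $\#\partial^{l_M}_\Lambda \Omega \leq \#\partial^{r_\Lambda}_\Lambda \Omega$ (as $r_\Lambda = r + l_M \geq l_M$), summing against $|V_g g(\nu)|^2$ produces $\mathcal{B} \leq C_d \Vert g \Vert_{M_\Lambda^*}^2\, \#\partial^{r_\Lambda}_\Lambda \Omega$. Substituting this bound together with the plunge estimate for $\mathscr{P}$ into the symbol and spectral pieces collected above, and tracking constants, yields the stated inequality.
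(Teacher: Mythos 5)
Your architecture is genuinely different from the paper's: instead of passing through the eigenvalue-defect bound (Lemma \ref{lemma:l1_eigenval_rewrite}) and the mollification estimate (Lemma \ref{lemma:chi_omega_f_props} and Proposition \ref{prop:approx_id_l1_est} fed into Lemma \ref{lemma:sum_to_boundary_estimate}), you compare $\rho_\Omega$ with the Berezin-type symbol $\tilde\rho_\Omega = \chi_\Omega *_\Lambda |V_gg|^2/(B\Vert g\Vert_{L^2}^2)$ and control the spectral piece by the trace norm of $P_{A_\Omega} - G_{\Omega,\Lambda}^g/B$. Most of this is correct: the symbol piece estimate, the passage $\Vert \rho_\Omega - \tilde\rho_\Omega\Vert_{\ell^1(\Lambda)} \leq \frac{B}{\Vert g\Vert_{L^2}^2}\Vert P_{A_\Omega} - G_{\Omega,\Lambda}^g/B\Vert_{S_1}$ via the Bessel bound, and the bound of the trace norm by $4\mathscr{P}+1$ all check out (the last one because the eigenvalues are decreasing, so one of the two tail sums consists entirely of terms equal to $\min(\tilde\lambda_k, 1-\tilde\lambda_k)$), as does the identity $\mathcal{B} = \sum_\nu |V_gg(\nu)|^2 N(\nu)$ and the reduction of everything to a bound on $\mathcal{B}$.

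The gap is in the discrete BV estimate, which is the actual heart of the theorem. Your claim that $\nu \in \Lambda$ can be reached by $k \leq C_d|\nu|/l_M$ generator steps is false for anisotropic or skewed lattices: for $\Lambda = \Z\times\varepsilon\Z$ and $\nu = (0,1)$ one has $|\nu| \approx l_M$ but every path along generators needs $1/\varepsilon$ steps, so the constant in $N(\nu) \leq C|\nu|\,\#\partial^{r_\Lambda}_\Lambda\Omega$ obtained this way degenerates with the lattice (it behaves like the operator norm of the inverse generator matrix), whereas the theorem asserts a constant depending only on $r$ and $d$. The natural repair --- take as fixed offsets the nearest lattice points to equally spaced points on the segment $[0,\nu]$, giving at most $1 + |\nu|/l_M$ steps --- produces steps of length up to $3l_M$, so the flip sets land in $\partial^{3l_M}_\Lambda\Omega$, which is not dominated by $\#\partial^{r+l_M}_\Lambda\Omega$ unless $r \geq 2l_M$ or one pays a packing constant depending on $l_M/l_m$, i.e.\ again on $\Lambda$. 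Avoiding exactly this lattice dependence is what the paper's mollification argument buys: the smooth $f$ with $\Vert\nabla f\Vert_{L^\infty} \leq C/r$ and support of $\nabla f$ in $\partial\Omega + B(0,r)$ lets one count points of translated copies of $\Lambda$ inside that support directly, with the lattice entering only through $\Vert g\Vert_{M^*_\Lambda}$ and $\#\partial^{r_\Lambda}_\Lambda\Omega$. A secondary, smaller issue: even granting your boundary estimate, the bookkeeping ($\frac{B-A}{B}\#(\Omega\cap\Lambda)$ from the symbol piece plus $4\frac{B-A}{B}\#(\Omega\cap\Lambda)$ from $4\mathscr{P}$) yields roughly $5\frac{B-A}{B}\#(\Omega\cap\Lambda)$, so you would prove a variant of the statement with a worse explicit constant than the stated $2\frac{B-A}{B}\#(\Omega\cap\Lambda)$.
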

        Note in particular that when the frame is tight, the error grows as $\# \partial^{r_\Lambda}_\Lambda \Omega$. This result can be used to approximate $\Omega$ directly from $\rho_\Omega$ as a level set.
        \begin{corollary}\label{cor:level}
            Let $g \in M_\Lambda^*(\R^d)$ and $\Lambda$ be such that $(g, \Lambda)$ induces a frame with frame constants $A, B > 0$, $r > 0$, $\Omega \subset \R^{2d}$ be compact and
		\begin{align*}
		      \Tilde{\Omega} = \big\{ \lambda \in \Lambda : \rho_\Omega(\lambda) > 1/2 \big\}.
		\end{align*}
		Then there exists an constant $C$ dependent only on $r$ and $d$ such that
		\begin{align*}
                \#\big( (\Omega \Delta \Tilde{\Omega}) \cap \Lambda \big) \leq C\left(\Vert g \Vert_{M_\Lambda^*}^2 + 1\right) \# \partial^{r_\Lambda}_\Lambda \Omega + 4\frac{B-A}{B} \# (\Omega \cap \Lambda) + \frac{2B}{\Vert g \Vert_{L^2}^2}
		\end{align*}
            where $\Delta$ denotes the symmetric difference of two sets, $r_\Lambda = r + l_M$ and $l_M$ is the diameter of the fundamental domain of $\Lambda$.
        \end{corollary}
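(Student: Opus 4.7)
The plan is to reduce Corollary \ref{cor:level} to Theorem \ref{theorem:main_l1} by a Markov-type argument, observing that any lattice point in the symmetric difference $\Omega \Delta \tilde{\Omega}$ contributes at least $1/2$ to the pointwise difference $|\rho_\Omega - \chi_\Omega|$.

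More precisely, since $\tilde{\Omega} \subset \Lambda$ by definition, I would decompose
\[
(\Omega \Delta \tilde{\Omega}) \cap \Lambda = \big[(\Omega \cap \Lambda) \setminus \tilde{\Omega}\big] \cup \big[\tilde{\Omega} \setminus \Omega\big].
\]
For $\lambda$ in the first piece we have $\chi_\Omega(\lambda) = 1$ and $\rho_\Omega(\lambda) \leq 1/2$, so $|\rho_\Omega(\lambda) - \chi_\Omega(\lambda)| \geq 1/2$. For $\lambda$ in the second piece we have $\chi_\Omega(\lambda) = 0$ and $\rho_\Omega(\lambda) > 1/2$, so again $|\rho_\Omega(\lambda) - \chi_\Omega(\lambda)| > 1/2$. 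Summing over the symmetric difference yields
\[
\tfrac{1}{2} \, \#\big((\Omega \Delta \tilde{\Omega}) \cap \Lambda\big) \leq \sum_{\lambda \in (\Omega \Delta \tilde{\Omega}) \cap \Lambda} |\rho_\Omega(\lambda) - \chi_\Omega(\lambda)| \leq \Vert \rho_\Omega - \chi_\Omega \Vert_{\ell^1(\Lambda)}.
\]

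Multiplying by $2$ and invoking Theorem \ref{theorem:main_l1} to bound the right-hand side gives the stated inequality, with the constant $C$ in the corollary being twice the constant from the theorem (which only alters the $r$- and $d$-dependent constant, preserving the form of the statement). No serious obstacle is anticipated, as the argument is purely a thresholding estimate; the only thing to verify carefully is the set-theoretic decomposition above so that every $\lambda$ counted on the left-hand side genuinely contributes at least $1/2$ to the $\ell^1$ norm.
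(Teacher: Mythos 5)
Your proposal is correct and follows essentially the same route as the paper: the paper also uses this Chebyshev-type thresholding, defining $E = \{\lambda \in \Lambda : |\rho_\Omega(\lambda) - \chi_\Omega(\lambda)| \geq 1/2\}$, bounding $\# E \leq 2\Vert \rho_\Omega - \chi_\Omega \Vert_{\ell^1(\Lambda)}$, showing $(\Omega \Delta \tilde{\Omega}) \cap \Lambda \subset E$ by the identical two-case analysis, and then invoking Theorem \ref{theorem:main_l1}. Your version merely sums directly over the symmetric difference instead of introducing $E$, which is an immaterial reorganization, and your accounting of the doubled constants matches the statement.
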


	In general, it is impossible to establish a tighter bound on the $\ell^1(\Lambda)$ norm than Theorem \ref{theorem:main_l1} which we prove in Theorem \ref{theorem:sharpness} below where a special case is investigated. This result is analogous to \cite[Theorem 1.6]{Abreu2017_sharp} but in the lattice setting we need to assume some additional conditions on the window $g$.
	\begin{theorem}\label{theorem:sharpness}
            Let $g \in M_\Lambda^*(\R^d)$ and $\Lambda$ be such $(g, \Lambda)$ induces a tight frame with frame constant $1$ and 
            \begin{align*}
                |V_gg(z)| \leq C (1+|z|)^{-s},\qquad V_gg(\lambda) \neq 0\quad \text{for } \lambda \in \Lambda \cap B(0, r + 3l_M)
            \end{align*}
            for some $C > 0$ and $s > 2d+1$ where $l_M$ is the diameter of the fundamental domain of $\Lambda$. Then there exists constants $C_1, C_2$ only dependent on $g$, the lattice $\Lambda$ and the radius $r$ such that
		\begin{align*}
                C_1 \# \partial^{r_\Lambda}_\Lambda B(0, R) \leq \Vert \rho_{B(0,R)} - \chi_{B(0,R)} \Vert_{\ell^1(\Lambda)} \leq C_2 \# \partial^{r_\Lambda}_\Lambda B(0,R)
		\end{align*}
            for all $R > 0$, where $r_\Lambda = r + l_M$ and $l_M$ is the diameter of the fundamental domain of $\Lambda$.
	\end{theorem}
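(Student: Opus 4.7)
The plan splits into upper and lower bounds. The upper bound is immediate from Theorem~\ref{theorem:main_l1}: with $A = B = 1$ the middle term vanishes, and the additive constant $B/\|g\|_{L^2}^2$ can be absorbed into the boundary term since $\#\partial^{r_\Lambda}_\Lambda B(0,R) \geq 1$ — any sphere contains a lattice point within $l_M$ by the fundamental-domain covering bound. For the lower bound I would first derive, in the tight-frame-constant-one setting, the exact identity
\[
\|\rho_\Omega - \chi_\Omega\|_{\ell^1(\Lambda)} = \frac{1}{\|g\|_{L^2}^2}\sum_{k=1}^\infty \bigl|\mathbf{1}_{k\leq A_\Omega}-\lambda_k^\Omega\bigr|,
\]
obtained by expanding $\|\rho_\Omega-\chi_\Omega\|_{\ell^1} = \sum_\lambda \chi_\Omega + \sum_\lambda \rho_\Omega - 2\sum_\lambda \chi_\Omega\rho_\Omega$ (valid since $0 \leq \rho_\Omega \leq 1$ by Bessel's inequality and $0 \leq \lambda_k^\Omega \leq 1$ because $\|G_{\Omega,\Lambda}^g\|_{\mathrm{op}} \leq 1$), evaluating each sum via the tight-frame resolution of identity $\sum_\mu \pi(\mu)g \otimes \pi(\mu)g = I$, and simplifying.

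Next I would apply the elementary inequality $|\mathbf{1}_{k\leq A_\Omega} - \lambda_k^\Omega| \geq \lambda_k^\Omega(1-\lambda_k^\Omega)$, which holds for any $\lambda_k \in [0,1]$, reducing the lower bound to $\sum_k \lambda_k^\Omega(1-\lambda_k^\Omega) = \tr(G) - \tr(G^2)$ where $G = G_{\Omega,\Lambda}^g$. A direct kernel computation gives $\tr(G) = \#(\Omega\cap\Lambda)\|g\|_{L^2}^2$ and $\tr(G^2) = \sum_{\lambda,\mu \in \Omega\cap\Lambda}|V_gg(\lambda-\mu)|^2$; combined with the tight-frame identity $\sum_{\mu\in\Lambda}|V_gg(\lambda-\mu)|^2 = \|g\|_{L^2}^2$, this collapses to the boundary kernel sum
\[
\sum_k \lambda_k^\Omega(1-\lambda_k^\Omega) = \sum_{\substack{\lambda \in \Omega\cap\Lambda\\ \mu \in \Lambda\setminus\Omega}} |V_gg(\lambda-\mu)|^2.
\]
For each $\mu$ in the outer boundary strip $\partial^{r_\Lambda}_\Lambda B(0,R) \setminus B(0,R)$ I would construct a partner $\lambda \in B(0,R)\cap\Lambda$ with $|\lambda-\mu| \leq r+3l_M$ as follows: set $q = (R-l_M)\mu/|\mu|$ and pick any lattice point $\lambda \in B(q,l_M)$, which is guaranteed to exist and to sit inside $B(0,R)$. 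The triangle inequality then gives $|\lambda-\mu| \leq l_M + l_M + r_\Lambda = r + 3l_M$, so $|V_gg(\lambda-\mu)|^2 \geq c_0 := \min\{|V_gg(\nu)|^2 : \nu \in \Lambda \cap B(0,r+3l_M)\} > 0$ by the non-vanishing hypothesis. A symmetric construction handles the inner boundary strip, and the worst-case overlap of the two pair families costs at most a factor of two, yielding the kernel sum $\geq \tfrac{c_0}{2}\#\partial^{r_\Lambda}_\Lambda B(0,R)$ and hence $C_1 = c_0/(2\|g\|_{L^2}^2)$.

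The hard part will be the geometric construction of the partner lattice point: one has to place it simultaneously on the opposite side of $\partial B(0,R)$ \emph{and} within the distance $r+3l_M$ prescribed by the non-vanishing window hypothesis, which is exactly why the theorem statement imposes this rather specific radius. Small-$R$ edge cases need a direct check on the covering step, and book-keeping for the inside/outside pair counting only mildly degrades the constants.
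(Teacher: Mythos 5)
Your upper bound is exactly the paper's: Theorem \ref{theorem:main_l1} with $A=B=1$, absorbing the additive $B/\Vert g\Vert_{L^2}^2$ because the boundary count is at least one. Your reduction of the lower bound to $\sum_k\lambda_k^\Omega(1-\lambda_k^\Omega)$ also coincides with the paper's (it is the combination of Lemma \ref{lemma:l1_eigenval_rewrite}, with equality in the tight case, and Lemma \ref{lemma:sum_of_eigs}). From there, however, you take a genuinely different route. The paper bounds $\sum_k\lambda_k^\Omega(1-\lambda_k^\Omega)\geq\delta^2\#\{k:\delta<\lambda_k^\Omega<1-\delta\}$ and then invokes the plunge-region estimate of Lemma \ref{lemma:plunge_size}, i.e.\ a translation of \cite[Theorem 5.5.3]{Feichtinger2003} to the boundary measure $\#\partial_\Lambda^{r}$ --- a result whose proof, as the paper itself remarks, has not appeared in print. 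You instead use tightness with frame constant $1$ to identify $\tr(G^g_{\Omega,\Lambda})-\tr((G^g_{\Omega,\Lambda})^2)$ with the cross kernel sum $\sum_{\lambda\in\Omega\cap\Lambda}\sum_{\mu\in\Lambda\setminus\Omega}|V_gg(\lambda-\mu)|^2$ and bound it below by pairing each lattice point of the boundary strip with a lattice point on the other side of the sphere at distance at most $r+3l_M$, which is precisely the radius on which the hypothesis guarantees $V_gg\neq0$; the factor-two overlap bookkeeping is correct. This is more elementary and self-contained (no dependence on the unpublished plunge-region theorem) and it makes the role of the hypothesis radius transparent; what the paper's route buys is a plunge-region statement of independent interest and a fully written small-$R$ argument.

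The remaining holes are concrete. First, the regime of small $R$: your construction $q=(R-l_M)\mu/|\mu|$ places $q$ on the wrong side of the sphere when $R<l_M$ (and $\lambda/|\lambda|$ is undefined for $\lambda=0$), and you only flag this rather than prove it, while the paper devotes a separate argument to $R\leq l_M$ showing $\Vert\rho_{B(0,R)}-\chi_{B(0,R)}\Vert_{\ell^1(\Lambda)}\geq 1-\lambda_1^{B(0,l_M)}>0$ while $\#\partial_\Lambda^{r_\Lambda}B(0,R)$ stays bounded. Within your scheme one can pair every outer-strip point with $0\in\Lambda\cap B(0,R)$ (distance at most $R+r_\Lambda\leq r+3l_M$ when $R\leq 2l_M$) and the interior strip points are boundedly many, but you still must exhibit at least one nonvanishing cross pair in case the outer strip happens to contain no lattice point, so this case is not automatic and needs to be written out. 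Second, two strictness issues: the inner-strip partner produced from $q=(R+l_M)\lambda/|\lambda|$ is only guaranteed to satisfy $|\mu|\geq R$, not $|\mu|>R$, so it may land inside the closed ball $\Omega$; and your triangle inequality saturates the bound $r+3l_M$ exactly, so if the nonvanishing hypothesis is read on the open ball there is no slack. Both are minor and fixable, but as stated the pairing can in degenerate configurations fail to produce an admissible pair.
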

        The proofs of these results follow similar paths to the original results in \cite{Abreu2015, Abreu2017_sharp} and the main novelty of the present work lies in relating the eigenvalues of Gabor multipliers to our boundary measure.
  
        Our final main result is not directly related to accumulated spectrograms, but its proof uses parts of the same method used to prove the three theorems above.
 
        The Weyl-Heisenberg ensemble, originally introduced in \cite{Abreu2017} and further studied in \cite{Abreu2019, Abreu2023, Katori2021}, is a determinantal point process induced by a window function which generalizes the Ginibre ensemble. While not previously mentioned in the literature to the best of our knowledge, there is a clear discrete counterpart for Gabor frames where the point process is restricted to $\Lambda$. We are able to show that when the Gabor frame is tight, the point process is hyperuniform which an analog to one of the main results for the continuous case in \cite{Abreu2017}.
	\begin{theorem}\label{theorem:hyperuniformity}
            Let $g \in M^*_\Lambda(\R^d)$ and $\Lambda$ be such that $(g, \Lambda)$ induces a tight frame, then the determinantal point process $\mathcal{X}_g$ on $\Lambda$ with correlation kernel $K_g(\lambda, \lambda') = \langle \pi(\lambda') g, \pi(\lambda)g \rangle$ is hyperuniform.
	\end{theorem}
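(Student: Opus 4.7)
The plan is to combine the standard variance formula for a determinantal point process with the observation that tightness of the Gabor frame makes the correlation kernel $K_g$ a self-adjoint projection on $\ell^2(\Lambda)$. Hyperuniformity in this discrete setting amounts to showing $\Var\big(\#(\mathcal{X}_g \cap B(0,R))\big) = o\big(\#(B(0,R) \cap \Lambda)\big)$ as $R \to \infty$, and I will in fact produce the sharper surface-order bound $O(R^{2d-1})$.

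First, I would invoke the DPP variance formula,
\begin{align*}
	\Var\big(\#(\mathcal{X}_g \cap B)\big) = \sum_{\lambda \in B \cap \Lambda} K_g(\lambda,\lambda) - \sum_{\lambda, \lambda' \in B \cap \Lambda} |K_g(\lambda, \lambda')|^2.
\end{align*}
Applying the tight frame identity to $f = \pi(\mu)g$ gives $\sum_{\lambda \in \Lambda}|K_g(\mu,\lambda)|^2 = \|\pi(\mu)g\|_{L^2}^2 = \|g\|_{L^2}^2 = K_g(\mu,\mu)$, which says exactly that $K_g$ is a projection on $\ell^2(\Lambda)$. Substituting this into the diagonal term of the variance formula, the portion of the second sum extending over all of $\Lambda$ cancels it exactly, leaving the boundary sum
\begin{align*}
	\Var\big(\#(\mathcal{X}_g \cap B)\big) = \sum_{\lambda \in B \cap \Lambda}\ \sum_{\lambda' \in \Lambda \setminus B} |V_gg(\lambda - \lambda')|^2,
\end{align*}
where I used $|K_g(\lambda,\lambda')| = |V_gg(\lambda-\lambda')|$ from the time-frequency shift composition rule.

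Next I would reindex by $\mu = \lambda - \lambda'$ to write
\begin{align*}
	\Var\big(\#(\mathcal{X}_g \cap B(0,R))\big) = \sum_{\mu \in \Lambda} |V_gg(\mu)|^2 \cdot N_R(\mu),
\end{align*}
where $N_R(\mu) = \#\{\lambda \in B(0,R) \cap \Lambda : \lambda - \mu \notin B(0,R)\}$. Any contributing $\lambda$ must lie in the annulus $B(0,R) \setminus B(0, R-|\mu|)$, so an elementary lattice-point estimate yields $N_R(\mu) \leq C|\mu| R^{2d-1}$ for $|\mu| \leq R$, while the trivial bound $N_R(\mu) \leq \#(B(0,R)\cap\Lambda) \leq C R^{2d} \leq C|\mu| R^{2d-1}$ handles $|\mu| \geq R$. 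Using that $\sum_{\mu \in \Lambda}|\mu||V_gg(\mu)|^2 = \|g\|_{M_\Lambda^*}^2 < \infty$ by assumption then gives
\begin{align*}
	\Var\big(\#(\mathcal{X}_g \cap B(0,R))\big) \leq C\|g\|_{M_\Lambda^*}^2\, R^{2d-1},
\end{align*}
which is $o(R^{2d})$ and hence $o(\#(B(0,R)\cap\Lambda))$, establishing hyperuniformity.

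The main obstacle is the projection property at the very beginning: this is the one point in the argument where tightness of the frame is indispensable, since without it $K_g$ is only a positive operator and a bulk term of the form $(1-A)\|g\|_{L^2}^2 \#(B\cap\Lambda)$ survives, destroying hyperuniformity. The remaining geometric lattice count is standard, and the summability of $|\mu||V_gg(\mu)|^2$ needed to close the estimate is built into the definition of the space $M_\Lambda^*(\R^d)$.
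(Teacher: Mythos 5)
Your argument is correct, but it takes a genuinely different route from the paper's. The paper also starts from the determinantal variance formula, but then feeds the resulting double sum directly into its Lemma \ref{lemma:sum_to_boundary_estimate} (which rests on the regularization machinery of Lemma \ref{lemma:chi_omega_f_props} and Proposition \ref{prop:approx_id_l1_est}), obtaining the bound $C\Vert g \Vert_{L^2}^2(\Vert g\Vert_{M^*_\Lambda}^2+1)\#\partial^{r_\Lambda}_\Lambda\Omega$ for arbitrary compact $\Omega$, and only then specializes to $\Omega = B(0,R)$ and counts lattice points in the annulus to get $O(R^{2d-1})$. You instead exploit tightness to turn $K_g$ into a projection, which makes the bulk terms cancel exactly and leaves the identity $\mathbb{V} = \sum_{\lambda \in B\cap\Lambda}\sum_{\lambda'\notin B}|V_gg(\lambda-\lambda')|^2$; reindexing by $\mu = \lambda-\lambda'$ and using the annulus count $N_R(\mu)\leq C|\mu|R^{2d-1}$ together with $\sum_\mu|\mu||V_gg(\mu)|^2 = \Vert g\Vert_{M^*_\Lambda}^2$ closes the estimate. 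Your route is more self-contained and gives an exact boundary representation of the variance rather than an upper bound via smoothing; the paper's route is less direct but reuses a lemma already needed for the accumulated spectrogram theorems and yields a bound in terms of $\#\partial^{r_\Lambda}_\Lambda\Omega$ valid for general compact sets, not just balls. Two small points to tidy: your projection identity $\sum_{\lambda}|K_g(\mu,\lambda)|^2 = K_g(\mu,\mu)$ requires the tight frame constant to equal $1$, so you should state the normalization (replace $g$ by a scalar multiple, as the paper does); and the bound $N_R(\mu)\leq C|\mu|R^{2d-1}$ for small $|\mu|$ should note that nonzero lattice vectors satisfy $|\mu|\geq l_m$ (and the $\mu=0$ term vanishes), so the constant may be absorbed.
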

	In Section \ref{sec:wh_ensemble}, we give a proper definition of the point process, define hyperuniformity and prove the theorem.
 
	\subsubsection*{Notational conventions}
        The ball centered at $z \in \R^{2d}$ with radius $r$ will be denoted by $B(z, r)$. When measuring the size of a set, we will write $\#$ for cardinalities of discrete sets and $|\cdot|$ for Lebesgue measures of sets with interiors as well as the arc length of paths or the ($d-1$)-dimensional Hausdorff measure of a boundary $\partial \Omega$. For complex numbers $z$, $|z|$ will denote the absolute value as customary. The symmetric difference between two sets $A, B$ will be denoted by $A \Delta B := (A \setminus B) \cup (B \setminus A)$. The values of constants will be allowed to change between inequalities so that factors can be absorbed. For a lattice $\Lambda$, the set of summable sequences will be denoted by $\ell^1(\Lambda)$ with $\Vert c \Vert_{\ell^1(\Lambda)} = \sum_{\lambda \in \Lambda} |c(\lambda)|$ and discrete convolutions between elements of $\ell^1(\Lambda)$ will be denoted by $*_\Lambda$. 
	
	\section{Background and tools}
        In this section we collect some common tools and results which will be used throughout the article. For more background on the motivations and interpretations of accumulated spectrograms, see the original article \cite{Abreu2015}, and for more properties of Gabor multipliers and a more thorough introduction, see \cite{grochenig_book, Feichtinger2003}.  
	
	\subsection{Bounding regularization error}
        In forthcoming proofs, we will repeatedly need to bound the difference $\chi_\Omega - \chi_\Omega *_\Lambda \phi$. The main tool for the continuous version of this is \cite[Lemma 3.2]{Abreu2015}. With the goal of establishing a version of that result for the lattice setting, we prove the following lemma showing that the characteristic function $\chi_\Omega$ can be well approximated by a Schwartz function $f$.
        \begin{lemma}\label{lemma:chi_omega_f_props}
            Given a compact set $\Omega \subset \R^{2d}$ and $r > 0$, there exists a Schwartz function $f$ and a constant $C$ only dependent on the dimension $d$ such that  
		\begin{enumerate}[label=(\roman*)]
			\item $f(z) = 1$ for $z \in \Omega$,\label{item:p1}
			\item $\operatorname{supp}(f) \subset \Omega + B(0, r)$,\label{item:p2}
			\item $\Vert \nabla f \Vert_{L^\infty} \leq C/r$,\label{item:p3}
			\item $\Vert \chi_\Omega - f \Vert_{\ell^1(\Lambda)} \leq \# \partial_\Lambda^r \Omega$.\label{item:p4}
		\end{enumerate}
        \end{lemma}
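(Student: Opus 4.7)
The plan is to construct $f$ via a standard mollification: let $\phi \in C_c^\infty(\R^{2d})$ be a nonnegative bump with $\operatorname{supp}(\phi) \subset B(0, 1/2)$ and $\int \phi = 1$, and set $\phi_r(z) = r^{-2d}\phi(z/r)$ so that $\operatorname{supp}(\phi_r) \subset B(0, r/2)$ and $\int \phi_r = 1$. Defining $\Omega' = \Omega + B(0, r/2)$, I will take
\[
    f = \chi_{\Omega'} * \phi_r.
\]
Since $\Omega'$ is bounded and $\phi_r \in C_c^\infty$, the convolution is $C^\infty$ with compact support and hence Schwartz.

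Next, I would verify properties (i)–(iii) directly from this construction. For (i), if $z \in \Omega$ and $|y| \leq r/2$, then $z - y \in \Omega + B(0, r/2) = \Omega'$, so $\chi_{\Omega'}(z-y) = 1$ for every $y$ in the support of $\phi_r$, giving $f(z) = \int \phi_r = 1$. For (ii), if $f(z) \neq 0$, then some $y \in B(0, r/2)$ satisfies $z - y \in \Omega'$, which forces $z \in \Omega + B(0, r/2) + B(0, r/2) = \Omega + B(0, r)$. For (iii), I differentiate under the convolution to get $\nabla f = \chi_{\Omega'} * \nabla \phi_r$, and use Young's inequality together with the scaling identity $\Vert \nabla \phi_r \Vert_{L^1} = r^{-1}\Vert \nabla \phi \Vert_{L^1}$ to conclude $\Vert \nabla f \Vert_{L^\infty} \leq \Vert \nabla \phi \Vert_{L^1}/r$, with $C = \Vert \nabla \phi \Vert_{L^1}$ depending only on $d$.

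The only step that requires a small observation is (iv). Because $\phi_r \geq 0$ with unit integral, $f$ takes values in $[0,1]$. Combining with (i) and (ii), the integrand $\chi_\Omega(\lambda) - f(\lambda)$ satisfies
\[
    \chi_\Omega(\lambda) - f(\lambda) = 0 \quad \text{if } \lambda \in \Omega \text{ or } \lambda \notin \Omega + B(0,r),
\]
so the support of $\chi_\Omega - f$ restricted to $\Lambda$ lies in $\Lambda \cap \big((\Omega + B(0,r)) \setminus \Omega\big)$. Any such $\lambda$ lies within distance $r$ of $\partial \Omega$ (since the segment from $\lambda$ to a witnessing point in $\Omega$ must cross $\partial\Omega$), giving $\lambda \in \partial^r_\Lambda \Omega$. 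Summing $|\chi_\Omega(\lambda) - f(\lambda)| \leq 1$ over this set yields (iv).

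The construction is standard mollification, so there is no serious obstacle; the mildly delicate point is making sure the constant in (iii) depends only on $d$, which I would handle by fixing the bump $\phi$ on the unit scale once and for all before applying the dilation $\phi_r$.
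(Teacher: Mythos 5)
Your proposal is correct and follows essentially the same route as the paper: mollify $\chi_{\Omega + B(0,r/2)}$ with a bump at scale $r/2$, get (i)--(iii) from standard convolution/scaling and Young's inequality, and get (iv) from $0 \leq f \leq 1$ together with the fact that $\chi_\Omega - f$ is supported in an $r$-neighborhood of $\partial\Omega$. The only cosmetic difference is that you fix a generic unit-scale bump and invoke the scaling identity for $\Vert \nabla \phi_r \Vert_{L^1}$, whereas the paper works with an explicit radial bump and computes the polar-coordinate integrals directly; the content is the same.
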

        \begin{proof}
		Using the smooth bump function
            \begin{align*}
                \phi(x) = \chi_{[-1, 1]}(x) e^{\frac{-1}{1-x^2}},
            \end{align*}
            we can define the Schwarz function $\phi_r(z) = \frac{c}{r^{2d}}\phi(|z|/r)$ supported in $B(0,r)$ and by choosing $c$ appropriately we can guarantee that it integrates to $1$. Indeed, with $\omega_{d}$ the surface area of the unit sphere in $\R^d$, the integral
            \begin{align*}
                \frac{c}{r^{2d}}\int_{B(0,r)} \phi(|z|/r)\,dz &= \frac{c}{r^{2d}}\omega_{2d-1}\int_0^r x^{2d-1}\phi(x/r)\,dx\\
                &=\frac{c}{r^{2d}}\omega_{2d-1} \int_0^1 y^{2d-1}r^{2d-1}\phi(y)r\,dy\\
                &= c\omega_{2d-1} \int_0^1 y^{2d-1}\phi(y)\,dy
            \end{align*}
            is independent of $r$. We are now ready to define $f$ as
            \begin{align*}
                f(z) = \chi_{\Omega + B(0, r/2)} * \phi_{r/2}(z).
            \end{align*}
            Since $\phi_{r/2}$ is Schwarz, so is $f$ and properties \ref{item:p1} and \ref{item:p2} follow from standard properties of convolutions using that $\supp(\phi_{r/2}) \subset B(0,r/2)$. Next \ref{item:p4} follow from $\supp(\chi_\Omega - f) \subset \partial\Omega + B(0,r)$ and that $\Vert f \Vert_{L^\infty} \leq \Vert \chi_\Omega \Vert_{L^\infty} \Vert \phi_{r/2} \Vert_{L^1} = 1$ by Young's inequality. Finally for the bound on $|\nabla f|$, we can estimate
            \begin{align*}
                \Vert \nabla f \Vert_{L^\infty} &= \Vert \chi_{\Omega + B(0, r/2)} * (\nabla \phi_{r/2}) \Vert_{L^\infty}\\
                &\leq \Vert \chi_{\Omega + B(0, r/2)} \Vert_{L^\infty} \Vert \nabla \phi_{r/2} \Vert_{L^1} = \int_{B(0,r/2)} |\nabla \phi_{r/2}(z)|\,dz\\
                &=\frac{c}{(r/2)^{2d}} \int_{B(0, r/2)} \frac{1}{r/2} \left|\phi'\left( \frac{|z|}{r/2}\right)\right|\,dz\\
                &=\frac{c}{(r/2)^{2d+1}} \omega_{2d-1} \int_{0}^{r/2} x^{2d-1}\left|\phi'\left( \frac{x}{r/2}\right)\right|\,dx\\
                &=\frac{c}{(r/2)^{2d+1}} \omega_{2d-1} \frac{r^{2d}}{2^{2d}}\int_0^1 y^{2d-1}|\phi'(y)|\,dy = \frac{C}{r}
            \end{align*}
            where $C$ is a constant that collects terms independent of $r$.
            
	\end{proof}
	Using this lemma, we can establish the promised lattice version of \cite[Lemma 3.2]{Abreu2015}.
	\begin{proposition}\label{prop:approx_id_l1_est}
		Let $\phi \in \ell^1(\Lambda)$ be non-negative and satisfy
		\begin{align*}
		      1-\delta \leq \sum_{\lambda \in \Lambda} \phi(\lambda) \leq 1,\qquad \sum_{\lambda \in \Lambda} |\lambda| |\phi(\lambda)| < \infty
		\end{align*}
            for some $0 \leq \delta \leq 1$. Then there exists a constant $C$ dependent on $r > 0$ such that
		\begin{align*}
                \Vert \chi_\Omega - \chi_\Omega *_\Lambda \phi \Vert_{\ell^1(\Lambda)} \leq C\left(\sum_{\lambda \in \Lambda} |\lambda| |\phi(\lambda)| + 1\right) \# \partial^{r_\Lambda}_\Lambda \Omega + \delta \# (\Omega \cap \Lambda)
		\end{align*}
		for any compact set $\Omega \subset \R^{2d}$, where $r_\Lambda = r+l_M$ and $l_M$ is the diameter of the fundamental domain of $\Lambda$.
	\end{proposition}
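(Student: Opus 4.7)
The plan is to transfer the strategy of the continuous version by inserting a smooth proxy for $\chi_\Omega$ and reducing to an estimate on its gradient. I would begin by invoking Lemma \ref{lemma:chi_omega_f_props} to produce a Schwartz function $f$ satisfying the four listed properties, and then telescope as
\begin{align*}
    \chi_\Omega - \chi_\Omega *_\Lambda \phi = (\chi_\Omega - f) + (f - f *_\Lambda \phi) + ((f - \chi_\Omega) *_\Lambda \phi).
\end{align*}
The first and third summands are immediate: property \ref{item:p4} handles the first, and Young's inequality for discrete convolutions combined with $\Vert \phi \Vert_{\ell^1(\Lambda)} \leq 1$ reduces the third to the first, each contributing at most $\#\partial^r_\Lambda \Omega$.

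All the work is in the middle term $\Vert f - f *_\Lambda \phi \Vert_{\ell^1(\Lambda)}$. I would split it into a mass-defect and a displacement piece,
\begin{align*}
    (f - f *_\Lambda \phi)(\mu) = f(\mu)\Bigl(1 - \sum_\lambda \phi(\lambda)\Bigr) + \sum_\lambda \bigl(f(\mu) - f(\mu - \lambda)\bigr)\phi(\lambda).
\end{align*}
The mass-defect piece is bounded in $\ell^1(\Lambda)$ by $\delta \Vert f \Vert_{\ell^1(\Lambda)} \leq \delta \bigl(\#(\Omega \cap \Lambda) + \#\partial^r_\Lambda \Omega\bigr)$, using $0 \leq f \leq 1$ together with $\operatorname{supp} f \subset \Omega + B(0,r)$. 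For the displacement piece I would apply the fundamental theorem of calculus,
\begin{align*}
    f(\mu) - f(\mu - \lambda) = \int_0^1 \lambda \cdot \nabla f(\mu - t\lambda)\,dt,
\end{align*}
and use both the gradient bound $\Vert \nabla f \Vert_\infty \leq C/r$ from \ref{item:p3} and the crucial support fact $\operatorname{supp}(\nabla f) \subset \partial \Omega + B(0, r)$ (which follows because $f$ is constant on $\Omega$ and on the complement of $\Omega + B(0,r)$, so its gradient can only be nonzero in the transition shell).

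The main obstacle is then to bound, uniformly in $t \in [0,1]$ and $\lambda \in \Lambda$, the translated lattice count $\#\bigl(\Lambda \cap (\partial\Omega + B(0,r) + t\lambda)\bigr)$ by $\#\partial^{r_\Lambda}_\Lambda \Omega$. I would handle this by passing to the quotient $\R^{2d}/\Lambda$: write $t\lambda = \lambda_0 + y$ with $\lambda_0 \in \Lambda$ and $y$ in the fundamental domain containing the origin, so $|y| \leq l_M$. Translation-invariance of $\Lambda$ under $\lambda_0$ then gives
\begin{align*}
    \#\bigl(\Lambda \cap (\partial\Omega + B(0,r) + t\lambda)\bigr) = \#\bigl(\Lambda \cap (\partial\Omega + B(0,r) + y)\bigr) \leq \#\bigl(\Lambda \cap (\partial\Omega + B(0, r_\Lambda))\bigr) = \#\partial^{r_\Lambda}_\Lambda \Omega,
\end{align*}
where the inequality uses $|y| \leq l_M$. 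Summing the resulting pointwise bound over $\mu \in \Lambda$ and against $\phi(\lambda)$ yields $\frac{C}{r}\bigl(\sum_\lambda |\lambda|\phi(\lambda)\bigr) \#\partial^{r_\Lambda}_\Lambda \Omega$ for the displacement piece.

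Finally, I would collect all four contributions: $2\#\partial^r_\Lambda \Omega$ from the telescoping ends, $\delta \#(\Omega \cap \Lambda) + \delta \#\partial^r_\Lambda \Omega$ from the mass defect, and the above displacement estimate. Absorbing the $1/r$ factor into the constant and using the monotonicity $\#\partial^r_\Lambda \Omega \leq \#\partial^{r_\Lambda}_\Lambda \Omega$ together with $\delta \leq 1$ to sweep the residual boundary pieces into the coefficient of $\#\partial^{r_\Lambda}_\Lambda \Omega$ yields the stated inequality.
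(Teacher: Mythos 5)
Your proposal is correct and follows essentially the same route as the paper's proof: the same triangle-inequality insertion of the Schwartz proxy $f$ from Lemma \ref{lemma:chi_omega_f_props}, the same split of $f - f *_\Lambda \phi$ into a mass-defect and a displacement term, the fundamental theorem of calculus with the gradient bound and the support of $\nabla f$ in $\partial\Omega + B(0,r)$, and the same reduction of the translated lattice count to $\#\partial^{r_\Lambda}_\Lambda\Omega$ via the fundamental-domain decomposition $t\lambda = \lambda_0 + y$. The bookkeeping at the end (absorbing $1/r$, $\delta \leq 1$, and $\#\partial^r_\Lambda\Omega \leq \#\partial^{r_\Lambda}_\Lambda\Omega$) also matches the paper.
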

	\begin{proof}
		Applying Lemma \ref{lemma:chi_omega_f_props}, we can replace $\chi_\Omega$ by $f$ using a triangle inequality argument as
		\begin{align*}
                \Vert \chi_\Omega - \chi_\Omega *_\Lambda \phi \Vert_{\ell^1(\Lambda)} \leq \underbrace{\Vert \chi_\Omega - f \Vert_{\ell^1(\Lambda)}}_{\leq \# \partial_\Lambda^r \Omega} + \Vert f - f *_\Lambda \phi \Vert_{\ell^1(\Lambda)} + \underbrace{\Vert f *_\Lambda \phi - \chi_\Omega *_\Lambda \phi \Vert_{\ell^1(\Lambda)}}_{\leq \# \partial_\Lambda^r \Omega}
		\end{align*}
            where we used Young's inequality for the estimate on the last term, Lemma \ref{lemma:chi_omega_f_props} and that $\Vert \phi \Vert_{\ell^1(\Lambda)} \leq 1$. Now it remains to show that the middle term can be bounded by a similar quantity. To do so, we will adapt \cite[Lemma 3.2]{Abreu2015} to the lattice setting. Note that
		\begin{align*}
                \Vert f &- f *_\Lambda \phi \Vert_{\ell^1(\Lambda)} = \sum_{\lambda \in \Lambda} \left| \sum_{\lambda' \in \Lambda } f(\lambda') \phi(\lambda - \lambda') - f(\lambda)\right|\\
                &= \sum_{\lambda \in \Lambda} \left| \sum_{\lambda' \in \Lambda } f(\lambda') \phi(\lambda - \lambda') - f(\lambda) \left(\sum_{\lambda' \in \Lambda}\phi(\lambda - \lambda') + 1 - \sum_{\lambda' \in \Lambda}\phi(\lambda - \lambda') \right)\right|\\
                &\leq \sum_{\lambda \in \Lambda} \left| \sum_{\lambda' \in \Lambda} \big[f(\lambda) - f(\lambda')\big] \phi(\lambda-\lambda') \right| + \sum_{\lambda \in \Lambda} \left| f(\lambda) \left( 1 - \sum_{\lambda' \in \Lambda} \phi(\lambda - \lambda') \right) \right|\\
                &\leq \sum_{\lambda \in \Lambda} \left| \sum_{\lambda' \in \Lambda} \big[f(\lambda) - f(\lambda')\big] \phi(\lambda-\lambda') \right| + \delta \Vert f \Vert_{\ell^1(\Lambda)}
		\end{align*}
		since $1 = \sum_{\lambda' \in \Lambda} \phi(\lambda') + 1 - \sum_{\lambda' \in \Lambda} \phi(\lambda')$. By elementary calculus, it holds that
		\begin{align*}
		f(\lambda) - f(\lambda') = \int_0^1 \langle \nabla f(t(\lambda - \lambda') + \lambda'), \lambda - \lambda' \rangle \,dt,
		\end{align*}
		and so we can write
		\begin{align*}
                \sum_{\lambda \in \Lambda} \left| \sum_{\lambda' \in \Lambda} \big[f(\lambda) - f(\lambda')\big] \phi(\lambda-\lambda') \right| &= \sum_{\lambda \in \Lambda} \left| \sum_{\lambda' \in \Lambda} \int_0^1 \langle \nabla f(t(\lambda - \lambda') + \lambda'), \lambda - \lambda' \rangle  \phi(\lambda-\lambda')\,dt \right|\\
                &\leq \int_0^1 \sum_{\lambda \in \Lambda} \sum_{\lambda' \in \Lambda} |\nabla f(t(\lambda - \lambda') + \lambda')| |\lambda - \lambda'| |\phi(\lambda - \lambda')|\,dt\\
                &=\int_0^1 \sum_{\lambda \in \Lambda} \sum_{\lambda' \in \Lambda} |\nabla f(t\lambda + \lambda')| |\lambda| |\phi(\lambda)|\,dt\\
                &=\sum_{\lambda \in \Lambda} |\lambda| |\phi(\lambda)| \int_0^1 \sum_{\lambda' \in \Lambda} |\nabla f(t\lambda + \lambda')|\,dt
		\end{align*}
		by Tonelli and Cauchy-Schwarz followed by a change of variables. We now claim that the final integral and sum can be uniformly bounded over all $t$ and $\lambda$. Indeed, $\nabla f$ is supported in $\partial \Omega + B(0,r)$ and $|\nabla f|$ is uniformly bounded by $C/r$ by Lemma \ref{lemma:chi_omega_f_props} so
		\begin{align*}
                \int_0^1 \sum_{\lambda' \in \Lambda} |\nabla f(t\lambda + \lambda')|\,dt \leq C \# \big( (\Lambda + \lambda t) \cap (\partial \Omega + B(0,r))\big)
		\end{align*}
            once we absorb $r$ in $C$. Now $\lambda t$ can be written as $\lambda t = \lambda_0 + z_0$ where $\lambda_0 \in \Lambda$ and $z_0$ is in the fundamental domain of $\Lambda$. This can be used to bound the quantity using the inclusion
            \begin{align*}
                (\Lambda + \lambda t) \cap (\partial \Omega + B(0,r)) &= (\Lambda + z_0) \cap (\partial \Omega + B(0,r))\\
                &= \Lambda \cap (\partial \Omega + B(z_0,r)) \subset \Lambda \cap (\partial \Omega + B(0,r_\Lambda)).
            \end{align*}
            Plugging this back into the $\Vert f - f*_\Lambda\phi \Vert_{\ell^1(\Lambda)}$ estimate yields
		\begin{align*}
                \Vert f - f *_\Lambda \phi \Vert_{\ell^1(\Lambda)} \leq C\left(\sum_{\lambda \in \Lambda} |\lambda| |\phi(\lambda)|\right) \# \partial^{r_\Lambda}_\Lambda \Omega + \delta \Vert f \Vert_{\ell^1(\Lambda)}.
		\end{align*}
            Adding back the two $\# \partial_\Lambda^r \Omega$ terms from earlier and using that $\# \partial^r_\Lambda \Omega \leq \# \partial^{r_\Lambda}_\Lambda \Omega$, we get
            \begin{align*}
                \Vert \chi_\Omega - \chi_\Omega *_\Lambda \phi \Vert_{\ell^1(\Lambda)} \leq \left( C\sum_{\lambda \in \Lambda} |\lambda| |\phi(\lambda)| + 2\right) \# \partial^{r_\Lambda}_\Lambda \Omega + \delta \Vert f \Vert_{\ell^1(\Lambda)}.
            \end{align*}
            The factor $\Vert f \Vert_{\ell^1(\Lambda)}$ can be expanded as
            \begin{align*}
                \Vert f \Vert_{\ell^1(\Lambda)} \leq \Vert f - \chi_\Omega\Vert_{\ell^1(\Lambda)} + \Vert \chi_\Omega \Vert_{\ell^1(\Lambda)} \leq \# \partial^r_\Lambda \Omega + \#(\Omega \cap \Lambda)
            \end{align*}
            and once we add this additional $\# \partial_\Lambda^r \Omega \leq \# \partial_\Lambda^{r_\Lambda} \Omega$ term and use that $\delta \leq 1$ we can absorb into $C$ to get the desired formulation.
	\end{proof}

        \subsection{Relating $\# \partial^r_\Lambda \Omega$ to $|\partial \Omega|$}\label{sec:bdry_measure}
        As mentioned in the introduction, our results are formulated with the more lattice-oriented boundary measure $\partial^r_\Lambda$ given by
        \begin{align*}
            \partial^r_\Lambda \Omega = \Lambda \cap (\partial\Omega + B(0,r))
        \end{align*}
        The following proposition clarifies the connection to the standard Lebesgue measure $|\partial \Omega|$ in the general case and shows why we cannot use it for $d > 1$ without additional assumptions.
        \begin{proposition}\label{prop:only_d1_works}
            For each radius $r$, lattice $\Lambda$ and integer $k > 0$, there exist constants $C, D > 0$ such that
            \begin{align*}
                \# \partial_\Lambda^r \Omega \leq C |\partial \Omega| + D
            \end{align*}
            for all compact $\Omega \subset \R^{2d}$ whose boundary consists of at most $k$ closed curves if and only if $d = 1$.
        \end{proposition}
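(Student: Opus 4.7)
The plan is to treat the two implications separately. For the forward direction ($d=1$), I would establish the bound by a standard tube-covering estimate, and for the reverse direction ($d \geq 2$) I would construct a family of degenerate compact sets whose boundaries are single closed curves but whose codimension-one Hausdorff measure vanishes.

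For $d=1$, suppose $\partial\Omega$ decomposes as a union of at most $k$ closed rectifiable curves $\gamma_1, \ldots, \gamma_k$ of lengths $L_1, \ldots, L_k$ with $\sum_i L_i = |\partial\Omega|$. On each $\gamma_i$ I would place $m_i \leq L_i/r + 1$ sample points $z_{i,1}, \ldots, z_{i,m_i}$ at arc-length spacing at most $r$, so that $\gamma_i \subset \bigcup_j B(z_{i,j}, r)$ and consequently $\partial\Omega + B(0, r) \subset \bigcup_{i,j} B(z_{i,j}, 2r)$. Setting $N := \sup_{z \in \R^2} \#(\Lambda \cap B(z, 2r))$, which is finite since $\Lambda$ is discrete, I would conclude
\[
\# \partial^r_\Lambda \Omega \;\leq\; N \sum_{i=1}^k m_i \;\leq\; \frac{N}{r} |\partial\Omega| + Nk,
\]
yielding $C = N/r$ and $D = Nk$.

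For $d \geq 2$, the key observation is that any $1$-rectifiable subset of $\R^{2d}$ has vanishing $(2d-1)$-dimensional Hausdorff measure, so a compact set $\Omega$ that is itself a closed curve automatically satisfies $|\partial\Omega| = 0$. To exploit this I would pick arbitrarily many distinct lattice points $\lambda_1, \ldots, \lambda_n \in \Lambda$, connect them in sequence by straight line segments, and close the loop, obtaining a polygonal closed curve $\gamma$. Setting $\Omega = \gamma$, the set is compact with empty interior in $\R^{2d}$, so its topological boundary equals $\gamma$ itself (a single closed curve, so $k=1$ suffices), and $|\partial\Omega| = 0$, whereas $\#\partial^r_\Lambda \Omega \geq n$ since each $\lambda_i$ lies on $\gamma$. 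For any candidate constants $C, D$ one can pick $n > D$ to violate the inequality, establishing that no such constants can exist.

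The only delicate point is the interpretation of ``closed curve'' and ``boundary'' in the two regimes. In $d=1$ the boundary components are naturally rectifiable Jordan curves in $\R^2$, and their arc length coincides with the 1-dimensional Hausdorff measure used in $|\partial\Omega|$. In $d \geq 2$ the flexibility of allowing compact sets with empty interior and one-dimensional topological boundary is precisely what enables the pathological examples; this conceptual framing, rather than any technical estimate, is the main hurdle of the proof.
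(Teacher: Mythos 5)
Your $d=1$ direction is correct and is essentially the paper's argument in a more streamlined form: the paper covers the tube $\partial\Omega + B(0,r)$ by small squares containing at most one lattice point each and then compares the area of $\gamma + B(0,r+l_m)$ with balls centered at points spaced along each boundary curve, whereas you count lattice points directly in the balls $B(z_{i,j},2r)$ using uniform discreteness of $\Lambda$; both yield constants depending only on $r$, $\Lambda$ and $k$ (and both should note, as the paper does, that the bound is trivial when some boundary curve is non-rectifiable, so $|\partial\Omega|=\infty$). Where you genuinely diverge is the counterexample for $d\ge 2$. The paper takes a thin hyperrectangle with one side of length $L$ and the rest of length $\varepsilon$, so that $|\partial\Omega|\le c L\varepsilon^{2d-2}$ can be made arbitrarily small while $\#\partial_\Lambda^r\Omega$ grows with $L$; you instead take $\Omega$ to be a closed polygonal curve through $n$ lattice points, so that $\partial\Omega=\Omega$ has vanishing codimension-one Hausdorff measure while $\#\partial_\Lambda^r\Omega\ge n$. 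This does refute the inequality as literally stated, provided $|\partial\Omega|$ is read as the $(2d-1)$-dimensional Hausdorff measure (the convention the paper states for boundaries), but be aware of two fragilities. First, the paper's conventions also allow $|\cdot|$ to mean arc length of a path; under that reading your $|\partial\Omega|$ would be comparable to $n\, l_m$ and the example collapses, so you should state explicitly which measure you use. Second, your example lives entirely on degenerate sets with empty interior and would be excluded by the mild requirement $\Omega=\overline{\operatorname{int}\Omega}$, whereas the paper's hyperrectangle shows that the failure persists for honest full-dimensional bodies with positive surface measure --- which is the substantive phenomenon motivating the maximally Ahlfors regular boundary assumption introduced immediately afterwards (the paper remarks that the hyperrectangle is Ahlfors regular with constant degenerating as $\varepsilon\to0$ or $L\to\infty$, while your curve is not Ahlfors regular in that codimension-one sense at all). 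So your proof is acceptable for the statement as written, but the thin-box construction is the more robust and more informative counterexample.
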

        Note that the constant $C$ needs to be dependent on the number of closed curves which make up $\partial \Omega$ as one could otherwise construct a counterexample as $\Omega = \big(\Lambda + B(0,\varepsilon)\big) \cap B(0, R)$ in which case the left hand side would grow as $R^2$ and the right hand side as $R^2 \varepsilon$.
        \begin{proof}
            Let $l_m$ be the largest number so that all lattice points are separated by at least $l_m$. We first prove the inequality for $d = 1$ and then present a counterexample for $d > 1$.

            Since $\Omega$ is bounded, the set $\partial \Omega + B(0, r)$ can be covered by a finite collection $Q$ of squares with side length $l_m/\sqrt{2}$. Then each element of $\Lambda \cap (\partial \Omega + B(0, r))$ is in no more than one of these squares since the points of $\Lambda$ are spaced by at least $l_m$ and two points in a square with side length $l_m/\sqrt{2}$ are at most $l_m$ apart from each other. Formally,
		\begin{align*}
		      \# \big(\Lambda \cap (\partial \Omega + B(0,r))\big) \leq \# Q.
		\end{align*}
		Meanwhile, the squares can be encapsulated in a bigger dilation around $\partial \Omega$:
		\begin{align*}
                \bigcup_{q \in Q} q \subset \partial \Omega + B(0, r + l_m).
		\end{align*}
		As the total area of the squares is given by $\# Q \frac{l_m^2}{2}$, we can estimate
		\begin{align}\label{eq:lattice_estimate_midway}
                \# \big(\Lambda \cap (\partial \Omega + B(0, r))\big) \leq \# Q \leq \frac{2}{l_m^2} \big|\partial \Omega + B\big(0, r + l_m\big)\big|.
		\end{align}
            Now it only remains to relate this quantity to $|\partial \Omega|$. By assumption, $\partial \Omega$ can be decomposed into a collection of closed curves $\gamma_1, \dots, \gamma_k$. For each such closed curve, we claim that
		\begin{align*}
		      |\gamma + B(0,R)| \leq C |\gamma| + D
		\end{align*}
            where we have written $R$ for $r + l_m$. Indeed, if $|\gamma| = \infty$ we are done so without loss of generality, we can place a finite collection of  points $z_1, \dots, z_n$ along $\gamma$, spaced by $R$. It then holds that
		\begin{align*}
		\gamma + B(0,R) \subset \bigcup_{i =1}^n B(z_i, 2R)
		\end{align*}
            since any point in $\gamma + B(0,R)$ is within $R$ distance to a point in $\gamma$ and any point in $\gamma$ is within $R$ distance to a point $z_i$. 
		
		The number of balls, $n$, can be related to $|\gamma|$ as
		\begin{align*}
		|\gamma| \leq R\cdot n \leq |\gamma| + R
		\end{align*}
		since the distance between the centers along $\gamma$ is $R$. Putting all of this together, we can estimate
		\begin{align*}
                |\gamma + B(0, R)| &\leq \left| \bigcup_{i=1}^n B(z_i, 2R) \right|\\
                &\leq n \cdot 4\pi R^2\\
                &\leq \left( \frac{|\gamma|}{R} + 1 \right)4\pi R^2
		\end{align*}
		which proves the claim. Applying this result to \eqref{eq:lattice_estimate_midway}, we obtain
		\begin{align*}
                \#\big(\Lambda \cap (\partial \Omega + B(0, r))\big) &\leq \frac{2}{l_m^2} \sum_{i=1}^k |\gamma_k + B(0, r+l_m)|\\
                &\leq \frac{2}{l_m^2} \sum_{i=1}^k\left( \frac{|\gamma_i|}{r+l_m} + 1 \right)4 \pi \big(r+l_m\big)^2\\
                &= \frac{8\pi}{l_m^2}\left( \frac{|\partial \Omega|}{r+l_m} + k \right) \big(r + l_m\big)^2
		\end{align*}
		from which we see that both constants only depend on $\Lambda$, $r$ and $k$.
            
            We will now show that equivalence does not hold for $d > 1$ using an example where $\Omega$ is particularly elongated. Specifically, let $\Omega$ be a hyperrectangle in $2d$ dimensions with all side lengths $\varepsilon$ except for one of length $L$, i.e.,
            \begin{align*}
                \Omega = \big\{ (x_1, \dots, x_{2d}) : 0 \leq |x_1|, |x_2|,\dots, |x_{2d-1}| \leq \varepsilon/2,\, 0 \leq |x_{2d}| \leq L/2 \big\}.
            \end{align*}
            Without loss of generality by rotating $\Omega$ if necessary, we can assume that there is an infinite collection of lattice points spaced by at most $l_M$, the maximum distance between two lattice points, along the axis where $\Omega$ has a side length $L$. Consequently, we can choose $L$ large enough so that $\# \partial^r_\Lambda \Omega > D+1$. Meanwhile, the surface area $|\partial \Omega|$ can be bounded by $c L \varepsilon^{2d-2}$ so by choosing $\varepsilon$ small enough, we can make the $C |\partial \Omega|$ term arbitrarily small. In particular, if $C|\partial \Omega| < 1$ then $C|\partial \Omega| + D < D+1$ which contradicts the $\# \partial^r_\Lambda \Omega$ bound. 
        \end{proof}

        If we assume additional regularity of $\partial \Omega$, we can relate $\# \partial_\Lambda^r \Omega$ to $|\partial \Omega|$ in all dimensions. The following proposition was contributed by an anonymous referee.

        A set $\Omega$ is said to have \emph{maximally Ahlfors regular boundary} with constant $\kappa_{\partial \Omega}$ if
        \begin{align*}
            \mathcal{H}^{d-1}(\partial \Omega \cap B(z, r)) \geq \kappa_{\partial \Omega} r^{d-1}\qquad \text{for every } z\in \partial \Omega,\, 0 < r < |\partial \Omega|^{1/(d-1)}.
        \end{align*}
        This condition is not too strict and has been used for similar purposes in \cite{Marceca2023, Marceca2024}.
        \begin{proposition}
            Let $\Omega$ be a compact set with maximally Ahlfors boundary with constant $\kappa_{\partial \Omega}$. For each radius $r > 0$ and lattice $\Lambda$, there exist constants $C, D$ such that
            \begin{align*}
                \# \partial_\Lambda^r \Omega \leq C \frac{|\partial \Omega|}{\kappa_{\partial \Omega}}\left(1 + \frac{D}{|\partial \Omega|}\right).
            \end{align*}
        \end{proposition}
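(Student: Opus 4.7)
The plan is to charge each lattice point of $\partial^r_\Lambda \Omega$ to a small piece of $\partial\Omega$ whose $(d-1)$-dimensional Hausdorff measure I lower bound via the maximally Ahlfors condition, and then to close the argument by a finite overlap bound for balls centered at lattice points. Fix a threshold scale $\rho_0 > 0$ (any convenient constant depending only on $r$ and $\Lambda$) and, for each $\lambda \in \partial^r_\Lambda \Omega$, pick a nearest boundary point $z_\lambda \in \partial \Omega$ with $|\lambda - z_\lambda| \leq r$. Choosing a scale $\rho \in (0, \rho_0]$ strictly smaller than $|\partial \Omega|^{1/(d-1)}$, the Ahlfors hypothesis applied at $z_\lambda$ gives
\begin{align*}
\mathcal{H}^{d-1}(\partial \Omega \cap B(z_\lambda, \rho)) \geq \kappa_{\partial \Omega}\, \rho^{d-1},
\end{align*}
and the containment $B(z_\lambda, \rho) \subset B(\lambda, r + \rho)$ transfers the bound to a ball centered at the lattice point.

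The key finite-overlap observation is that, for any $z \in \R^{2d}$, the cardinality of $\Lambda \cap B(z, r+\rho_0)$ is uniformly bounded by some constant $M = M(r, \rho_0, \Lambda)$ because the lattice points are uniformly separated. Summing the Ahlfors bound over $\lambda \in \partial^r_\Lambda \Omega$ and interchanging the sum and integral over $\partial\Omega$ via Tonelli then yields
\begin{align*}
\# \partial^r_\Lambda \Omega \cdot \kappa_{\partial\Omega}\,\rho^{d-1}
&\leq \sum_{\lambda \in \partial^r_\Lambda \Omega} \mathcal{H}^{d-1}\bigl(\partial\Omega \cap B(\lambda, r+\rho)\bigr) \\
&= \int_{\partial\Omega} \#\{\lambda \in \partial^r_\Lambda \Omega : z \in B(\lambda, r+\rho)\}\, d\mathcal{H}^{d-1}(z) \\
&\leq M\, |\partial\Omega|.
\end{align*}

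It then remains to choose $\rho$ depending on the size of $|\partial\Omega|$. In the regime $|\partial\Omega|^{1/(d-1)} \geq \rho_0$, I take $\rho = \rho_0$, yielding $\#\partial^r_\Lambda\Omega \leq \frac{M}{\kappa_{\partial\Omega}\rho_0^{d-1}}|\partial\Omega|$. In the opposite regime, I take $\rho = |\partial\Omega|^{1/(d-1)}/2 < \rho_0$, which gives the $\Omega$-independent bound $\#\partial^r_\Lambda\Omega \leq \frac{M\cdot 2^{d-1}}{\kappa_{\partial\Omega}}$. Combining the two via $\max(a,b) \leq a + b$ produces the stated form $\#\partial^r_\Lambda\Omega \leq \frac{C}{\kappa_{\partial\Omega}}(|\partial\Omega| + D)$ with $C = M/\rho_0^{d-1}$ and $D = 2^{d-1}\rho_0^{d-1}$ depending only on $r$ and $\Lambda$.

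The step I expect to require the most care is the small-$|\partial\Omega|$ regime, since the Ahlfors condition is only available at scales below $|\partial\Omega|^{1/(d-1)}$ and so a single fixed $\rho$ cannot be used throughout. The fix is to let $\rho$ shrink with $|\partial\Omega|$ while capping it from above by the fixed $\rho_0$, which is precisely what keeps the overlap constant $M$ uniform across both regimes.
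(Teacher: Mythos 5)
Your argument is correct, but it takes a different route from the paper. The paper never applies the Ahlfors condition directly: it covers the tube $\partial\Omega + B(0,r)$ by translates $\lambda + F$ of a fundamental domain, observes $\# \partial_\Lambda^r \Omega \leq \#\Lambda^*$ where $\Lambda^*$ indexes the translates meeting the tube, and then converts $\#\Lambda^*$ into a volume bound $\frac{1}{|F|}\bigl|\partial\Omega + B(0,r+l_M)\bigr|$, at which point the Ahlfors regularity enters only through a cited external result (a lemma of Marceca--Romero bounding the Lebesgue measure of a tubular neighborhood by $\frac{|\partial\Omega|}{\kappa_{\partial\Omega}}$ times the expected factors). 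You instead give a self-contained dual counting argument: apply the Ahlfors lower bound at a nearest boundary point of each $\lambda \in \partial^r_\Lambda\Omega$ at a scale $\rho$, sum, and use Tonelli together with the uniform bound $M$ on the number of lattice points in any ball of radius $r+\rho_0$; the two-regime choice of $\rho$ (capped at $\rho_0$, shrunk to $|\partial\Omega|^{1/(d-1)}/2$ when $|\partial\Omega|$ is small) correctly handles the scale restriction in the Ahlfors hypothesis and is exactly what produces the additive constant $D$. In effect you reprove, in the special case needed, the tube-measure estimate that the paper imports, so your proof is more elementary and self-contained, at the cost of being specific to this counting statement. Two cosmetic points: at the boundary case $|\partial\Omega|^{1/(d-1)} = \rho_0$ your choice $\rho = \rho_0$ is not strictly below the admissible threshold, so either take $\rho \nearrow \rho_0$ and pass to the limit or shrink $\rho_0$ by a fixed factor; and, like the paper's proof, your argument implicitly assumes $0 < |\partial\Omega| < \infty$, the degenerate cases being either vacuous or trivial.
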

        \begin{proof}
            Let $F$ be a fundamental domain of $\Lambda$ with $\operatorname{diam}(F) = l_M$. Since $\Omega$ is compact it follows that the collection
            \begin{align*}
                \Lambda^* = \big\{ \lambda \in \Lambda : (\lambda + F) \cap (\partial\Omega + B(0,r)) \neq \emptyset \big\}
            \end{align*}
            is finite and the union of all translates $\lambda + F, \lambda \in \Lambda^*$ covers $\partial \Omega + B(0,r)$. This implies that
            \begin{align*}
                \big( \partial \Omega + B(0,r) \big) \subset \bigcup_{\lambda \in \Lambda^*} (\lambda + F) \subset \big( \partial \Omega + B(0, r+l_M) \big).
            \end{align*}
            Since each set $\lambda + F$ contains exactly one point in $\Lambda$, it follows that $\# \partial_\Lambda^r \Omega = \#\big( \Lambda \cap (\partial \Omega + B(0,r)) \big) \leq \# \Lambda^*$ and it remains to bound $\# \Lambda^*$. An application of \cite[Lemma 2.1]{Marceca2024} yields
            \begin{align*}
                \# \Lambda^* = \frac{1}{|F|} \left| \bigcup_{\lambda \in \Lambda^*}(\lambda + F) \right| &\leq \frac{1}{|F|}\big| \partial \Omega + B(0, r+l_m) \big|\\
                &\leq \frac{C_d}{|F|} \frac{|\partial \Omega|}{\kappa_{\partial \Omega}} (r+l_M) \left( 1+ \frac{(r+l_M)^{d-1}}{ |\partial \Omega| } \right)
            \end{align*}
            which concludes the proof.
        \end{proof}
        This result implies alternative versions of all the main results formulated with $|\partial \Omega|$ instead of $\# \partial_\Lambda^{r_\Lambda} \Omega$.

        \begin{remark}
            The hyperrectangle from Proposition \ref{prop:only_d1_works} actually has maximally Ahlfors regular boundary but the constant goes to zero as $\varepsilon \to 0$ or $L \to \infty$ which is why it worked as a counterexample.
        \end{remark}
        
	\subsection{Spectral properties of Gabor multipliers}\label{sec:prelim_spectral}
        Our key to proving all of the main theorems will be to relate them to spectral properties of Gabor multipliers. For this reason, we collect some results on the eigenvalues in this section, the first of which is the simple bound
        \begin{equation}\label{eq:eigenvalue_bounds}
            \begin{aligned}
                0 \leq \lambda_k^\Omega = \big\langle G_{\Omega, \Lambda}^g h_k^\Omega, h_k^\Omega \big\rangle &= \sum_{\lambda \in \Lambda} \chi_\Omega(\lambda) V_g h_k^\Omega(\lambda) \langle \pi(\lambda) g, h_k^\Omega \rangle\\
            &\leq \sum_{\lambda \in \Lambda} |\langle h_k^\Omega, \pi(\lambda) g \rangle|^2 \leq B\Vert h_k^\Omega \Vert_{L^2}^2 = B.
	   \end{aligned}
        \end{equation}
        In the upcoming proofs we will repeatedly have use for the fact that for any orthonormal basis $(e_n)_{n=1}^\infty$,
        \begin{align}\label{eq:easy_sum_eq_g}
            \sum_{n=1}^\infty |V_g e_n(z)|^2 = \Vert g \Vert_{L^2}^2 \qquad \text{ for all }z \in \R^{2d}.
        \end{align}
        To see that it holds, simply write out $|V_g e_n(z)|^2 = \langle e_n, \pi(z)g \rangle \langle \pi(z) g, e_n \rangle$ and sum. Note that this places an upper bound of $1$ on the accumulated spectrogram $\rho_\Omega$.
        
        The next results on the trace and Hilbert-Schmidt norm of Gabor multipliers are standard but we provide a proof in the interest of completion.
	\begin{lemma}\label{lemma:sum_of_eigs}
		The eigenvalues $\{ \lambda_k^\Omega \}_{k=1}^\infty$ of $G_{\Omega, \Lambda}^g$ satisfy
		\begin{enumerate}[label=(\roman*)]
			\item $\sum\limits_{k=1}^\infty \lambda_k^\Omega = \#(\Omega \cap \Lambda)\Vert g \Vert_{L^2}^2$,
			\item $\sum\limits_{k=1}^\infty (\lambda_k^\Omega)^2 = \sum\limits_{\lambda \in \Omega \cap \Lambda} \sum\limits_{\lambda' \in \Omega \cap \Lambda} |V_gg(\lambda-\lambda')|^2$.
		\end{enumerate}
	\end{lemma}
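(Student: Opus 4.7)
The plan is to identify both sums with trace functionals of $G_{\Omega,\Lambda}^g$ and then compute those functionals directly from the rank-one expansion \eqref{eq:gabor_multiplier_def}, bypassing the eigendecomposition. Since $\Omega$ is compact, $\chi_\Omega(\lambda)$ vanishes outside the finite set $\Omega \cap \Lambda$, so \eqref{eq:gabor_multiplier_def} writes $G_{\Omega,\Lambda}^g$ as a finite sum of rank-one operators $\pi(\lambda)g \otimes \pi(\lambda)g$. This makes $G_{\Omega,\Lambda}^g$ trace-class and Hilbert--Schmidt automatically, and no convergence issues arise when swapping sums below.

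For (i), I would take the trace against the eigenbasis $(h_k^\Omega)$, giving $\operatorname{tr}(G_{\Omega,\Lambda}^g) = \sum_k \lambda_k^\Omega$ on the one hand. On the other hand, expanding $\langle G_{\Omega,\Lambda}^g h_k^\Omega, h_k^\Omega\rangle$ using \eqref{eq:gabor_multiplier_def} and interchanging the order of summation yields
\[
\operatorname{tr}(G_{\Omega,\Lambda}^g) = \sum_{\lambda \in \Lambda} \chi_\Omega(\lambda) \sum_{k=1}^\infty |V_g h_k^\Omega(\lambda)|^2,
\]
and the inner sum collapses to $\|g\|_{L^2}^2$ by \eqref{eq:easy_sum_eq_g}. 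This immediately gives (i).

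For (ii), self-adjointness of $G_{\Omega,\Lambda}^g$ gives $\sum_k (\lambda_k^\Omega)^2 = \|G_{\Omega,\Lambda}^g\|_{\mathrm{HS}}^2 = \operatorname{tr}\bigl((G_{\Omega,\Lambda}^g)^2\bigr)$. I would then multiply the rank-one expansion by itself, using the elementary identity $(u_1 \otimes v_1)(u_2 \otimes v_2) = \langle u_2, v_1\rangle\, u_1 \otimes v_2$ and $\operatorname{tr}(u \otimes v) = \langle u, v\rangle$ (both consistent with the paper's convention $(h\otimes h)(f) = \langle f, h\rangle h$), to obtain
\[
\operatorname{tr}\bigl((G_{\Omega,\Lambda}^g)^2\bigr) = \sum_{\lambda, \lambda' \in \Omega \cap \Lambda} |\langle \pi(\lambda')g, \pi(\lambda)g\rangle|^2.
\]
The last step is the standard covariance of the STFT, $|\langle \pi(\lambda')g, \pi(\lambda)g\rangle| = |V_gg(\lambda-\lambda')|$, which rewrites the right-hand side in the claimed form.

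I do not expect a real obstacle here; the whole argument is finite-dimensional bookkeeping once the rank-one structure is exploited. The only minor points to double-check are the sign/phase conventions in $\pi(\lambda')^*\pi(\lambda) = c\,\pi(\lambda-\lambda')$ (which drop out under the absolute value) and the direction of the inner product in the paper's tensor notation, neither of which affects the final identities.
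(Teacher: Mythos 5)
Your proposal is correct and follows essentially the same route as the paper: both identities are obtained by identifying the sums with $\tr(G_{\Omega,\Lambda}^g)$ and $\tr\bigl((G_{\Omega,\Lambda}^g)^2\bigr)$ and evaluating these from the finite rank-one expansion, with \eqref{eq:easy_sum_eq_g} collapsing the inner sum for (i). The only cosmetic difference is in (ii): the paper expands $\langle G_{\Omega,\Lambda}^g h_k^\Omega, G_{\Omega,\Lambda}^g h_k^\Omega\rangle$ in the eigenbasis and uses completeness of $(h_k^\Omega)$, whereas you multiply the rank-one terms directly via $(u_1\otimes v_1)(u_2\otimes v_2)=\langle u_2,v_1\rangle\, u_1\otimes v_2$ --- the same computation in slightly different clothing.
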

        \begin{proof}
            For the trace, we can compute
            \begin{align*}
                \sum_{k = 1}^\infty \big\langle G_{\Omega, \Lambda}^g h_k^\Omega, h_k^\Omega \big\rangle &= \sum_{k=1}^\infty \sum_{\lambda \in \Omega \cap \Lambda} V_g h_k^\Omega(\lambda) \langle \pi(\lambda) g, h_k^\Omega \rangle\\
                &= \sum_{\lambda \in \Omega \cap \Lambda} \sum_{k=1}^\infty |V_g h_k^\Omega(\lambda)|^2 = \sum_{\lambda \in \Omega \cap \Lambda} \Vert g \Vert_{L^2}^2 = \# (\Omega \cap \Lambda) \Vert g \Vert_{L^2}^2
            \end{align*}
            where we used \eqref{eq:easy_sum_eq_g}. Meanwhile for the sum of the squares of the eigenvalues, we can write
            \begin{align*}
                \sum_{k = 1}^\infty \big\langle G_{\Omega, \Lambda}^g h_k^\Omega, G_{\Omega, \Lambda}^g h_k^\Omega \big\rangle &= \sum_{k=1}^\infty \sum_{\lambda \in \Omega \cap \Lambda} V_g h_k^\Omega(\lambda) \langle \pi(\lambda) g, G_{\Omega, \Lambda}^g h_k^\Omega \rangle\\
                &=\sum_{k=1}^\infty \sum_{\lambda \in \Omega \cap \Lambda} V_g h_k^\Omega(\lambda)\overline{\sum_{\lambda' \in \Omega \cap \Lambda} V_g h_k^\Omega(\lambda') \langle \pi(\lambda')g, \pi(\lambda) g\rangle}\\
                &= \sum_{\lambda \in \Omega \cap \Lambda} \sum_{\lambda' \in \Omega \cap \Lambda} \sum_{k=1}^\infty \langle h_k^\Omega, \pi(\lambda) g \rangle \langle \pi(\lambda') g, h_k^\Omega \rangle \langle \pi(\lambda) g, \pi(\lambda') g \rangle\\
                &= \sum_{\lambda \in \Omega \cap \Lambda} \sum_{\lambda' \in \Omega \cap \Lambda} |V_gg(\lambda - \lambda')|^2
            \end{align*}
            since $(h_k^\Omega)_{k=1}^\infty$ is an orthonormal basis.
        \end{proof}
        The next lemma is a version of \cite[Lemma 4.14]{Feichtinger2024} which works for non-tight frames.
        \begin{lemma}\label{lemma:H_arg}
            Let $g \in L^2(\R^d)$ and $\Lambda$ be such that $(g, \Lambda)$ induces a frame with frame constants $A, B$ and $\Omega \subset \R^{2d}$ be compact. Then for each $\delta \in (0, B)$,
            \begin{align*}
                \Bigg| B\#\big\{ k : \lambda_k^\Omega &> B(1-\delta) \big\} - \#(\Omega \cap \Lambda) \Vert g \Vert_{L^2}^2\Bigg|\\
                &\leq \max\left\{ \frac{1}{\delta}, \frac{1}{1-\delta} \right\} \left| \# (\Omega \cap \Lambda) \Vert g \Vert_{L^2}^2 - \frac{1}{B}\sum_{\lambda \in \Omega \cap \Lambda} \sum_{\lambda' \in \Omega \cap \Lambda} |V_gg(\lambda - \lambda')|^2 \right|.
            \end{align*}
        \end{lemma}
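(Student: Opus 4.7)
The plan is to run a standard H-argument, adapted so that the upper bound $B$ on the eigenvalues (from \eqref{eq:eigenvalue_bounds}) plays the role of $1$ in the tight case. Concretely, I would introduce the auxiliary function $H(t) = t(B-t)$, which is non-negative on $[0,B]$ and therefore satisfies $H(\lambda_k^\Omega) \geq 0$ for every eigenvalue of $G_{\Omega,\Lambda}^g$. The whole proof then amounts to comparing $\sum_k H(\lambda_k^\Omega)$ to the two error pieces that appear on the left-hand side.

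First I would expand $\sum_k H(\lambda_k^\Omega) = B \sum_k \lambda_k^\Omega - \sum_k (\lambda_k^\Omega)^2$ and invoke Lemma \ref{lemma:sum_of_eigs} to rewrite this as
\begin{align*}
\sum_k H(\lambda_k^\Omega) = B\left( \#(\Omega \cap \Lambda)\Vert g \Vert_{L^2}^2 - \frac{1}{B}\sum_{\lambda \in \Omega \cap \Lambda}\sum_{\lambda' \in \Omega \cap \Lambda} |V_gg(\lambda - \lambda')|^2 \right),
\end{align*}
which identifies the H-sum, up to a factor of $B$, with the second absolute value on the right-hand side of the lemma. Since $H \geq 0$, this quantity is automatically non-negative, so the absolute value is really just the quantity itself.

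Next I would split the eigenvalue index set at the threshold $B(1-\delta)$, writing $S_1 = \{k : \lambda_k^\Omega > B(1-\delta)\}$ and $S_2 = \{k : \lambda_k^\Omega \leq B(1-\delta)\}$. Using the trace formula again, algebraic manipulation gives
\begin{align*}
B\#S_1 - \#(\Omega \cap \Lambda)\Vert g \Vert_{L^2}^2 = \sum_{k \in S_1}(B - \lambda_k^\Omega) - \sum_{k \in S_2} \lambda_k^\Omega,
\end{align*}
so by the triangle inequality it suffices to control each of the two sums. For $k \in S_1$ I would use $\lambda_k^\Omega > B(1-\delta)$ to get $B - \lambda_k^\Omega < H(\lambda_k^\Omega)/(B(1-\delta))$; for $k \in S_2$ I would use $B - \lambda_k^\Omega \geq B\delta$ to get $\lambda_k^\Omega \leq H(\lambda_k^\Omega)/(B\delta)$. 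Summing over both index sets and pulling out $\max\{1/\delta, 1/(1-\delta)\}/B$ gives exactly the claimed bound, once the H-sum is re-expressed using the first step.

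There is no real obstacle here; the only point requiring a little care is that the upper eigenvalue bound is $B$ rather than $1$ (which is what forces the factors of $B$ to appear in the correct places and explains why the right-hand side carries a $1/B$ in front of the double sum). The non-tight hypothesis enters only through \eqref{eq:eigenvalue_bounds}, after which the argument is algebraically identical to the tight case.
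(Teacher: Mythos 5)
Your proposal is correct and is essentially the paper's own proof: the paper merely packages your two case bounds on $S_1$ and $S_2$ as the single pointwise inequality $|H(t)| \leq \max\{1/\delta, 1/(1-\delta)\}\,(t - t^2/B)$ for a piecewise-defined $H$ applied to $G_{\Omega,\Lambda}^g$ via the functional calculus, and then concludes with the same trace identities from Lemma \ref{lemma:sum_of_eigs}. No gaps.
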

        \begin{proof}
            By the eigenvalues bound \eqref{eq:eigenvalue_bounds}, the operator $H$ defined as 
            \begin{align*}
                H\big(G_{\Omega, \Lambda}^g\big) = \sum_{k=1}^\infty H(\lambda_k^\Omega) (h_k^\Omega \otimes h_k^\Omega),\qquad H(t) = \begin{cases}
                    -t \qquad\hspace{1.1mm} \text{if }0 \leq t \leq B(1-\delta),\\
                    B-t \quad \text{if } B(1-\delta) < t \leq B
                \end{cases}
            \end{align*}
            is well-defined. By applying $H$ to $G_{\Omega, \Lambda}^g$ and taking the trace we get that
            \begin{align*}
                \tr\big( H\big(G_{\Omega, \Lambda}^g\big) \big) = \sum_{k=1}^\infty H(\lambda_k^\Omega) = B\#\big\{ k : \lambda_k^\Omega > B(1-\delta) \big\} - \# (\Omega \cap \Lambda) \Vert g \Vert_{L^2}^2.
            \end{align*}
            As a function, $H$ can be bounded by
            \begin{align*}
                |H(t)| \leq \max\left\{ \frac{1}{\delta}, \frac{1}{1-\delta} \right\}\left( t - \frac{t^2}{B} \right)
            \end{align*}
            and hence
            \begin{align*}
                \Bigg|B\#\big\{ k : &\lambda_k^\Omega > B(1-\delta) \big\} - \# (\Omega \cap \Lambda) \Vert g \Vert_{L^2}^2\Bigg| = \big|\tr\big(H\big(G_{\Omega, \Lambda}^g\big)\big)\big|\leq \tr(|H|(G_{\Omega, \Lambda}^g))\\
                &\leq \max\left\{ \frac{1}{\delta}, \frac{1}{1-\delta} \right\} \left( \tr\big(G_{\Omega, \Lambda}^g\big) - \frac{1}{B}\tr\big(\big(G_{\Omega, \Lambda}^g\big)^2\big) \right)\\
                &= \max\left\{ \frac{1}{\delta}, \frac{1}{1-\delta} \right\} \left| \# (\Omega \cap \Lambda) \Vert g \Vert_{L^2}^2 - \frac{1}{B}\sum_{\lambda \in \Omega \cap \Lambda} \sum_{\lambda' \in \Omega \cap \Lambda} |V_gg(\lambda - \lambda')|^2 \right|.
            \end{align*}
        \end{proof}
	The next property is essentially a standard result \cite[Lemma 4.1]{Abreu2015} specialized to the setting of lattice convolutions $*_\Lambda$.
	\begin{lemma}\label{lemma:op_op_conv_calc}
		Let $g \in L^2(\R^d)$ and $\Omega \subset \R^{2d}$ be compact. Then
		\begin{align*}
		\sum_{k=1}^\infty \lambda_k^\Omega |V_g h_k^\Omega(\lambda)|^2 = \chi_\Omega *_{\Lambda} |V_gg|^2(\lambda).
		\end{align*}
	\end{lemma}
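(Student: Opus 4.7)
The strategy is to rewrite the left-hand side as a sandwiched inner product involving $G_{\Omega,\Lambda}^g$, then expand using the defining formula \eqref{eq:gabor_multiplier_def}, and finally recognize the discrete convolution.

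First I would use the eigendecomposition $G_{\Omega,\Lambda}^g = \sum_{k=1}^\infty \lambda_k^\Omega (h_k^\Omega \otimes h_k^\Omega)$ to note that for every $\lambda \in \Lambda$,
\begin{align*}
\sum_{k=1}^\infty \lambda_k^\Omega |V_g h_k^\Omega(\lambda)|^2 = \sum_{k=1}^\infty \lambda_k^\Omega |\langle h_k^\Omega, \pi(\lambda)g\rangle|^2 = \bigl\langle G_{\Omega,\Lambda}^g \pi(\lambda) g, \pi(\lambda) g \bigr\rangle,
\end{align*}
which converts the spectral sum into a single matrix coefficient of $G_{\Omega,\Lambda}^g$ on the coherent state $\pi(\lambda)g$.

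Next I would apply the definition \eqref{eq:gabor_multiplier_def} of the Gabor multiplier to $\pi(\lambda)g$, giving
\begin{align*}
G_{\Omega,\Lambda}^g \pi(\lambda)g = \sum_{\lambda' \in \Lambda} \chi_\Omega(\lambda') \, V_g(\pi(\lambda)g)(\lambda') \, \pi(\lambda')g,
\end{align*}
and pair with $\pi(\lambda)g$ to obtain
\begin{align*}
\bigl\langle G_{\Omega,\Lambda}^g \pi(\lambda)g, \pi(\lambda)g \bigr\rangle = \sum_{\lambda' \in \Lambda} \chi_\Omega(\lambda') \, \overline{\langle \pi(\lambda')g,\pi(\lambda)g\rangle} \, \langle \pi(\lambda')g,\pi(\lambda)g \rangle = \sum_{\lambda' \in \Lambda} \chi_\Omega(\lambda') |\langle \pi(\lambda')g,\pi(\lambda)g\rangle|^2,
\end{align*}
where I used $V_g(\pi(\lambda)g)(\lambda') = \langle \pi(\lambda)g,\pi(\lambda')g\rangle = \overline{\langle \pi(\lambda')g,\pi(\lambda)g\rangle}$.

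The final step is to invoke the standard covariance identity for time-frequency shifts: $\pi(\lambda')^*\pi(\lambda)$ equals $\pi(\lambda - \lambda')$ up to a unimodular phase, so $|\langle \pi(\lambda')g,\pi(\lambda)g\rangle|^2 = |V_g g(\lambda-\lambda')|^2$. Substituting and recognizing the resulting expression as a lattice convolution gives
\begin{align*}
\sum_{\lambda' \in \Lambda} \chi_\Omega(\lambda') |V_g g(\lambda - \lambda')|^2 = (\chi_\Omega *_\Lambda |V_g g|^2)(\lambda),
\end{align*}
which is exactly the right-hand side. There is no real obstacle here; the only step that requires care is verifying the phase cancellation that yields $|\langle \pi(\lambda')g,\pi(\lambda)g\rangle| = |V_g g(\lambda - \lambda')|$, but this is a direct consequence of the commutation relations for $\pi$.
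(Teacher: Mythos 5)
Your proof is correct and follows essentially the same route as the paper: the paper evaluates $\tr\big( G_{\Omega,\Lambda}^g \,\pi(\lambda)(g\otimes g)\pi(\lambda)^*\big)$ in two ways, which is precisely the matrix coefficient $\langle G_{\Omega,\Lambda}^g \pi(\lambda)g, \pi(\lambda)g\rangle$ that you compute once via the eigendecomposition and once via the definition \eqref{eq:gabor_multiplier_def}. Your phrasing simply bypasses the trace bookkeeping, and the covariance argument for $|\langle \pi(\lambda')g,\pi(\lambda)g\rangle| = |V_gg(\lambda-\lambda')|$ is fine.
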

	\begin{proof}
		We will compute the trace $\tr\left( G_{\Omega, \Lambda}^g \pi(\lambda) (g \otimes g) \pi(\lambda)^* \right)$ using both the singular value decomposition and the definition \eqref{eq:gabor_multiplier_def} and equate the results. For the singular value decomposition, we have
		\begin{align*}
		\tr\left( \sum_{k=1}^\infty \lambda_k^\Omega (h_k^\Omega \otimes h_k^\Omega) \pi(\lambda) (g \otimes g) \pi(\lambda)^*  \right) &= \sum_{k=1}^\infty \lambda_k^\Omega \tr\Big( (h_k^\Omega \otimes h_k^\Omega) \pi(\lambda) (g \otimes g) \pi(\lambda)^* \Big)\\
		&=\sum_{k=1}^\infty \lambda_k^\Omega \sum_{n=1}^\infty \big\langle (h_k^\Omega \otimes h_k^\Omega) \pi(\lambda) (g \otimes g) \pi(\lambda)^* e_n, e_n \big\rangle\\
		&= \sum_{k=1}^\infty \lambda_k^\Omega \sum_{n=1}^\infty \big\langle e_n, \pi(\lambda) g\big\rangle \big\langle \pi(\lambda) g, h_k^\Omega \big\rangle \big\langle h_k^\Omega, e_n \big\rangle\\
		&=\sum_{k=1}^\infty \lambda_k^\Omega |V_g h_k^\Omega(\lambda)|^2.
		\end{align*}
		Meanwhile the trace can also be computed as
		\begin{align*}
		\tr\left( G_{\Omega, \Lambda}^g \pi(\lambda) (g \otimes g) \pi(\lambda)^* \right) &= \sum_{n=1}^\infty \big\langle G_{\Omega, \Lambda}^g \pi(\lambda) (g \otimes g) \pi(\lambda)^* e_n, e_n \big\rangle\\
		&=\sum_{n=1}^\infty \big\langle \pi(\lambda)^* e_n, g\big\rangle \big\langle G_{\Omega, \Lambda}^g \pi(\lambda) g, e_n \big\rangle\\
		&= \sum_{n=1}^\infty \big\langle e_n, \pi(\lambda) g \big\rangle \left\langle \sum_{\lambda' \in \Lambda} \chi_\Omega(\lambda') \langle \pi(\lambda)g, \pi(\lambda')g \right\rangle \big\langle \pi(\lambda') g, e_n\big\rangle\\
		&= \sum_{\lambda' \in \Lambda} \chi_\Omega(\lambda') |\langle \pi(\lambda') g, \pi(\lambda) g\rangle |^2\\
		&= \chi_\Omega *_\Lambda |V_gg|^2(\lambda)
		\end{align*}
		which finishes the proof.
	\end{proof}
	We this result, we are ready to apply Proposition \ref{prop:approx_id_l1_est} on spectral quantities.
        \begin{lemma}\label{lemma:sum_to_boundary_estimate}
            Let $g \in M^*_\Lambda(\R^d)$ and $\Lambda$ be such that $(g, \Lambda)$ induces a frame with frame constants $A, B > 0$, and $\Omega \subset \R^{2d}$ be compact. Then
		\begin{align*}
                \Bigg| \frac{1}{B} \sum_{\lambda \in \Lambda} \sum_{\lambda' \in \Lambda} &\chi_\Omega(\lambda) \chi_\Omega(\lambda') |V_gg(\lambda - \lambda')|^2 - \# (\Omega \cap \Lambda)\Vert g \Vert_{L^2}^2  \Bigg|\\
                &\leq C\Vert g \Vert_{L^2}^2 \left(\Vert g \Vert_{M_\Lambda^*}^2 + 1 \right) \# \partial^{r_\Lambda}_\Lambda \Omega + \frac{B-A}{B} \# (\Omega \cap \Lambda)\Vert g \Vert_{L^2}^2
		\end{align*}
		for a constant $C$ depending only on $r$ and $d$.
	\end{lemma}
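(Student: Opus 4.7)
The strategy is to reformulate the double sum as a lattice convolution, normalize to obtain a function satisfying the hypotheses of Proposition \ref{prop:approx_id_l1_est}, and then apply that proposition.

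First I would rewrite the double sum using the observation that $|V_gg(z)| = |V_gg(-z)|$, giving
\begin{align*}
    \sum_{\lambda \in \Lambda} \sum_{\lambda' \in \Lambda} \chi_\Omega(\lambda) \chi_\Omega(\lambda') |V_gg(\lambda-\lambda')|^2 = \sum_{\lambda \in \Lambda} \chi_\Omega(\lambda) \big(\chi_\Omega *_\Lambda |V_gg|^2\big)(\lambda).
\end{align*}
Using $\chi_\Omega(\lambda)^2 = \chi_\Omega(\lambda)$ and $\#(\Omega \cap \Lambda) = \sum_\lambda \chi_\Omega(\lambda)$, the expression inside the absolute value becomes
\begin{align*}
    \Vert g \Vert_{L^2}^2 \sum_{\lambda \in \Lambda} \chi_\Omega(\lambda) \big[ (\chi_\Omega *_\Lambda \phi)(\lambda) - \chi_\Omega(\lambda)\big],\qquad \phi(\lambda) := \frac{|V_gg(\lambda)|^2}{B \Vert g \Vert_{L^2}^2}.
\end{align*}
Taking absolute values and using $0 \leq \chi_\Omega \leq 1$, the quantity is bounded by $\Vert g \Vert_{L^2}^2 \Vert \chi_\Omega - \chi_\Omega *_\Lambda \phi \Vert_{\ell^1(\Lambda)}$.

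Next I would verify that $\phi$ satisfies the hypotheses of Proposition \ref{prop:approx_id_l1_est}. Nonnegativity is clear, and the frame inequality applied to $g$ itself gives $A \Vert g \Vert_{L^2}^2 \leq \sum_{\lambda \in \Lambda} |V_gg(\lambda)|^2 \leq B \Vert g \Vert_{L^2}^2$, so $A/B \leq \sum_\lambda \phi(\lambda) \leq 1$ and we may take $\delta = (B-A)/B$. The first moment is $\sum_\lambda |\lambda| \phi(\lambda) = \Vert g \Vert_{M^*_\Lambda}^2/(B \Vert g \Vert_{L^2}^2) < \infty$ since $g \in M^*_\Lambda(\R^d)$.

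Finally, Proposition \ref{prop:approx_id_l1_est} yields
\begin{align*}
    \Vert \chi_\Omega - \chi_\Omega *_\Lambda \phi \Vert_{\ell^1(\Lambda)} \leq C\left(\frac{\Vert g \Vert_{M^*_\Lambda}^2}{B \Vert g \Vert_{L^2}^2} + 1\right) \# \partial^{r_\Lambda}_\Lambda \Omega + \frac{B-A}{B} \#(\Omega \cap \Lambda),
\end{align*}
and multiplying through by $\Vert g \Vert_{L^2}^2$ produces the stated bound after mild rearrangement of the $B$-dependent factors into the constant $C$. The main obstacle is the algebraic setup in the first step: one must spot that adding and subtracting $\chi_\Omega(\lambda)$ inside the reformulated sum is what brings the problem into the exact form of Proposition \ref{prop:approx_id_l1_est}, and that dividing by $B \Vert g \Vert_{L^2}^2$ is the correct normalization so that the frame bounds become the $(1-\delta, 1)$ mass condition in that proposition.
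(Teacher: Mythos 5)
Your proposal is correct and follows essentially the same route as the paper: rewrite the double sum as $\sum_\lambda \chi_\Omega(\lambda)\,(\chi_\Omega *_\Lambda |V_gg|^2)(\lambda)$, normalize with $\phi = |V_gg|^2/(B\Vert g\Vert_{L^2}^2)$ so the frame inequality applied to $g$ gives the mass condition with $1-\delta = A/B$, and invoke Proposition \ref{prop:approx_id_l1_est}. Even the final bookkeeping step, absorbing the $B$-dependent normalization of the first moment $\sum_\lambda |\lambda|\phi(\lambda) = \Vert g\Vert_{M^*_\Lambda}^2/(B\Vert g\Vert_{L^2}^2)$ into the stated constant, is treated at the same level of detail as in the paper's own proof.
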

	\begin{proof}
		We estimate
		\begin{align*}
                \Bigg| \frac{1}{B}\sum_{\lambda \in \Lambda} &\sum_{\lambda' \in \Lambda} \chi_\Omega(\lambda) \chi_\Omega(\lambda') |V_gg(\lambda - \lambda')|^2 - \# (\Omega \cap \Lambda)\Vert g \Vert_{L^2}^2  \Bigg|\\
                &= \Bigg| \frac{1}{B}\sum_{\lambda \in \Lambda} \chi_\Omega(\lambda) \big(\chi_\Omega *_\Lambda |V_gg|^2(\lambda)\big) - \Vert g \Vert_{L^2}^2\sum_{\lambda \in \Lambda} \chi_\Omega(\lambda)\Bigg| \\
                &= \Vert g \Vert_{L^2}^2\Bigg| \sum_{\lambda \in \Lambda} \chi_\Omega(\lambda) \Bigg(\frac{1}{B\Vert g \Vert_{L^2}^2}\chi_\Omega *_\Lambda |V_gg|^2(\lambda) - \chi_\Omega(\lambda)\Bigg)\Bigg|\\
                &\leq \Vert g \Vert_{L^2}^2\sum_{\lambda \in \Lambda} \Bigg| \frac{1}{B \Vert g \Vert_{L^2}^2}\chi_\Omega *_\Lambda |V_gg|^2(\lambda) - \chi_\Omega(\lambda)\Bigg|\\
                &= \Vert g \Vert_{L^2}^2\Bigg\Vert \chi_\Omega *_\Lambda \frac{|V_gg|^2}{B \Vert g \Vert_{L^2}^2} - \chi_\Omega \Bigg\Vert_{\ell^1(\Lambda)}.
		\end{align*}
            Now since $A \Vert g \Vert_{L^2}^2 \leq \sum_{\lambda \in \Lambda} |V_gg(\lambda)^2| \leq B \Vert g \Vert_{L^2}^2$ by the frame inequality, we can apply Proposition \ref{prop:approx_id_l1_est} with $\phi = \frac{|V_gg|^2}{B \Vert g \Vert_{L^2}^2}$ and $\frac{A}{B} = 1-\delta$ to the final norm to obtain the desired bound.
	\end{proof}        
	
	\subsection{Lattice Gabor space}\label{sec:prelim_rkhs}
        The image space of the standard short-time Fourier transform, the so called \emph{Gabor space} $V_g(L^2) \subset L^2(\R^{2d})$, is a reproducing kernel Hilbert space (RKHS) with reproducing kernel $K_g(z, w) = \langle \pi(w)g, \pi(z)g \rangle$ \cite{Feichtinger2014}. The Toeplitz operators on this space, Gabor-Toeplitz operators, are unitarily equivalent to localization operators via conjugation with the STFT. Similarly, it can be shown that the image of $L^2(\R^d)$ under our STFT, which maps to $\ell^2(\Lambda)$, also is a reproducing kernel Hilbert space \cite[Section 5]{Feichtinger2014} with reproducing kernel
	\begin{align}\label{eq:lattice_gabor_rk}
            K_g(\lambda, \lambda') = \langle \pi(\lambda')g, \pi(\lambda) g \rangle.
	\end{align}
	\section{Accumulated spectrograms}    
	In this section we prove all the theorems on accumulated spectrograms. As we will see, the proofs generally follow those from \cite{Abreu2015} and \cite{Abreu2017_sharp}.
	
	\subsection{$\ell^1$ estimate}
	We first set out to prove our most important result, Theorem \ref{theorem:main_l1}, using the approach from \cite{Abreu2017_sharp}.
	
        The following lemma allows us to move from $\Vert \rho_\Omega - \chi_\Omega \Vert_{\ell^1(\Lambda)}$ to a purely spectral quantity since $\tr(G_{\Omega, \Lambda}^g) = \# (\Omega \cap \Lambda)\Vert g \Vert_{L^2}^2$ by Lemma \ref{lemma:sum_of_eigs}.
	
	\begin{lemma}\label{lemma:l1_eigenval_rewrite}
		Let $g \in L^2(\R^d)$ and $\Lambda$ be such that $(g, \Lambda)$  induces a frame with frame constants $A, B > 0$, and $\Omega \subset \R^{2d}$ be compact. Then 
		\begin{align*}
                \Vert \rho_\Omega - \chi_\Omega \Vert_{\ell^1(\Lambda)} \leq \frac{2}{\Vert g \Vert_{L^2}^2} \left( \# (\Omega \cap \Lambda)\Vert g\Vert_{L^2}^2 - \sum_{k=1}^{A_\Omega} \lambda_k^\Omega \right) + \frac{B}{\Vert g \Vert_{L^2}^2}.
		\end{align*}
	\end{lemma}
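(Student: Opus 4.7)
The plan is to rewrite the $\ell^1$ norm as a spectral quantity by splitting according to whether $\lambda$ lies in $\Omega$ or not, and then use the eigenvalue identification
$\sum_{\lambda\in\Omega\cap\Lambda}|V_g h_k^\Omega(\lambda)|^2 = \langle G_{\Omega,\Lambda}^g h_k^\Omega, h_k^\Omega\rangle = \lambda_k^\Omega$,
which is just the definition of the Gabor multiplier tested against its own eigenfunctions.

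First I would note that $\rho_\Omega(\lambda)\le 1$ for all $\lambda$, which follows from \eqref{eq:easy_sum_eq_g} since the partial sum is bounded by the full sum $\Vert g\Vert_{L^2}^2$. This lets me drop the absolute values and split
\begin{align*}
\Vert \rho_\Omega - \chi_\Omega\Vert_{\ell^1(\Lambda)}
&= \sum_{\lambda\in\Omega\cap\Lambda}\bigl(1-\rho_\Omega(\lambda)\bigr) + \sum_{\lambda\in\Lambda\setminus\Omega}\rho_\Omega(\lambda)\\
&= \#(\Omega\cap\Lambda) + \sum_{\lambda\in\Lambda}\rho_\Omega(\lambda) - 2\sum_{\lambda\in\Omega\cap\Lambda}\rho_\Omega(\lambda).
\end{align*}

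Next I would evaluate the two remaining sums. Swapping the orders of summation (justified by nonnegativity / Tonelli), the inner sum in the last term collapses to the eigenvalues themselves, yielding
$\sum_{\lambda\in\Omega\cap\Lambda}\rho_\Omega(\lambda) = \frac{1}{\Vert g\Vert_{L^2}^2}\sum_{k=1}^{A_\Omega}\lambda_k^\Omega$.
For the full lattice sum, the same swap gives $\sum_{\lambda\in\Lambda}|V_g h_k^\Omega(\lambda)|^2$, which by the upper frame bound is at most $B\Vert h_k^\Omega\Vert_{L^2}^2 = B$, so $\sum_{\lambda\in\Lambda}\rho_\Omega(\lambda) \leq B A_\Omega/\Vert g\Vert_{L^2}^2$.

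Finally, using the definition $A_\Omega = \lceil \#(\Omega\cap\Lambda)\Vert g\Vert_{L^2}^2/B\rceil \leq \#(\Omega\cap\Lambda)\Vert g\Vert_{L^2}^2/B + 1$ to bound $BA_\Omega/\Vert g\Vert_{L^2}^2 \leq \#(\Omega\cap\Lambda) + B/\Vert g\Vert_{L^2}^2$, I would combine everything to get
\begin{align*}
\Vert \rho_\Omega-\chi_\Omega\Vert_{\ell^1(\Lambda)} \leq 2\#(\Omega\cap\Lambda) - \frac{2}{\Vert g\Vert_{L^2}^2}\sum_{k=1}^{A_\Omega}\lambda_k^\Omega + \frac{B}{\Vert g\Vert_{L^2}^2},
\end{align*}
which rearranges to the claimed inequality. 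There isn't really a hard step here; the only subtlety is handling the non-tight case, where $\sum_{\lambda\in\Lambda}|V_gh_k^\Omega(\lambda)|^2$ is not exactly $B$ but only bounded by it, and keeping track of the ceiling in $A_\Omega$, which is precisely where the extra $B/\Vert g\Vert_{L^2}^2$ term enters.
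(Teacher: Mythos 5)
Your proposal is correct and follows essentially the same route as the paper: splitting the $\ell^1$ norm over $\Omega$ and its complement, identifying $\sum_{\lambda\in\Omega\cap\Lambda}|V_g h_k^\Omega(\lambda)|^2=\lambda_k^\Omega$, using the upper frame bound $B$ for the full lattice sum, and the ceiling bound on $A_\Omega$ to produce the extra $B/\Vert g\Vert_{L^2}^2$ term. The only difference is cosmetic bookkeeping (you combine the two regional sums algebraically before estimating, while the paper bounds each piece separately), so the arguments are the same in substance.
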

	\begin{proof}
            We first rewrite the difference $\Vert \rho_\Omega - \chi_\Omega \Vert_{\ell^1(\Lambda)}$ as
            \begin{align}\label{eq:l1_diff_unnormalized}
                \Vert \rho_\Omega - \chi_\Omega \Vert_{\ell^1(\Lambda)} = \frac{1}{\Vert g \Vert_{L^2}^2} \left\Vert \sum_{k=1}^{A_\Omega} |V_g h_k^\Omega|^2 - \Vert g \Vert_{L^2}^2 \chi_\Omega \right\Vert_{\ell^1(\Lambda)}.
            \end{align}
		Now note that the eigenvalues of $G_{\Omega, \Lambda}^g$ can be written as
		\begin{align}\label{eq:gabor_eigenvalue}
                \lambda_k^\Omega = \big\langle G_{\Omega, \Lambda}^g h_k^\Omega, h_k^\Omega \big\rangle = \sum_{\lambda \in \Omega \cap \Lambda} V_g h_k^\Omega(\lambda) \langle \pi(\lambda) g, h_k^\Omega \rangle = \sum_{\lambda \in \Omega \cap \Lambda} |V_gh_k^\Omega(\lambda)|^2.
		\end{align}
            The $\ell^1(\Lambda)$ difference in \eqref{eq:l1_diff_unnormalized} can be split into two parts, the interior and exterior of $\Omega$. For the interior, we have that $\chi_\Omega(\lambda) = 1$ and $\sum_{k=1}^{A_\Omega} |V_g h_k^\Omega(\lambda)|^2 \leq \Vert g \Vert_{L^2}^2$ by \eqref{eq:easy_sum_eq_g}, so
		\begin{align*}
                \sum_{\lambda \in \Omega \cap \Lambda} \left|\sum_{k=1}^{A_\Omega} |V_g h_k^\Omega(\lambda)|^2 - \Vert g \Vert_{L^2}^2 \chi_\Omega(\lambda)\right| &=\# (\Omega \cap \Lambda)\Vert g \Vert_{L^2}^2 - \sum_{k=1}^{A_\Omega}\sum_{\lambda \in \Omega \cap \Lambda} |V_g h_k^\Omega(\lambda)|^2\\
                &=\# (\Omega \cap \Lambda)\Vert g \Vert_{L^2}^2 - \sum_{k=1}^{A_\Omega}\lambda_k^\Omega
		\end{align*}
		where we used \eqref{eq:gabor_eigenvalue} for the second step. Meanwhile for the exterior where $\chi_\Omega(\lambda) = 0$,
		\begin{align}\nonumber
                \sum_{\lambda \in \Omega^c \cap \Lambda} \left| \sum_{k=1}^{A_\Omega} |V_g h_k^\Omega(\lambda)|^2 - \Vert g \Vert_{L^2}^2 \chi_\Omega(\lambda)\right| &= \sum_{k=1}^{A_\Omega}\sum_{\lambda \in \Lambda} |V_g h_k^\Omega(\lambda)|^2 - \sum_{k=1}^{A_\Omega}\sum_{\lambda \in \Omega \cap \Lambda} |V_g h_k^\Omega(\lambda)|^2\\\label{eq:non_tight_ineq}
                &\leq B A_\Omega - \sum_{k=1}^{A_\Omega} \lambda_k^\Omega\\\nonumber
                &\leq B + \# (\Omega \cap \Lambda) \Vert g \Vert_{L^2}^2 - \sum_{k=1}^{A_\Omega} \lambda_k^\Omega
		\end{align}
            where we used that the upper frame bound is $B$. Combining these two results, we get the expression in the statement of the lemma.
	\end{proof}
        Note that the only inequalities in the above proof stem from the frame being non-tight and $\frac{\# (\Omega \cap \Lambda)\Vert g \Vert_{L^2}^2}{B}$ not being an integer.
	\begin{reptheorem}{theorem:main_l1}
            Let $g \in M_\Lambda^*(\R^d)$ and $\Lambda$ be such that $(g, \Lambda)$ induces a frame with frame constants $A, B > 0$, $r > 0$ and $\Omega \subset \R^{2d}$ be compact. Then there exists a constant $C$ depending only on $r$ and $d$ such that
		\begin{align*}
                \Vert \rho_\Omega - \chi_\Omega \Vert_{\ell^1(\Lambda)} &\leq C\left(\Vert g \Vert_{M_\Lambda^*}^2 + 1\right) \# \partial^{r_\Lambda}_\Lambda \Omega + 2\frac{B-A}{B}\#(\Omega \cap \Lambda) + \frac{B}{\Vert g \Vert_{L^2}^2}
		\end{align*}
		where $r_\Lambda = r + l_M$ and $l_M$ is the diameter of the fundamental domain of $\Lambda$.
	\end{reptheorem}
	\begin{proof}
		By Lemma \ref{lemma:sum_of_eigs} followed by Lemma \ref{lemma:sum_to_boundary_estimate}, it holds that
		\begin{equation}\label{eq:trace_difference}
                \begin{aligned}
                    0 &\leq \tr\big(G_{\Omega, \Lambda}^g\big) - \frac{1}{B}\tr\big( (G_{\Omega, \Lambda}^g)^2 \big)\\
                    &\leq C\Vert g \Vert_{L^2}^2 \left(\Vert g \Vert_{M_\Lambda^*}^2 + 1\right) \# \partial^{r_\Lambda}_\Lambda \Omega + \frac{B-A}{B}\# (\Omega \cap \Lambda)\Vert g \Vert_{L^2}^2.
                \end{aligned}
		\end{equation}
		We can also write the trace difference \eqref{eq:trace_difference}, with $A_\Omega = \big\lceil\frac{\# (\Omega \cap \Lambda)\Vert g \Vert_{L^2}^2}{B}\big\rceil$, as
		\begin{align*}
                \tr\big(G_{\Omega, \Lambda}^g\big) &- \frac{1}{B}\tr\big( (G_{\Omega, \Lambda}^g)^2 \big) = \frac{1}{B}\sum_{k=1}^\infty \lambda_k^\Omega(B - \lambda_k^\Omega)\\
                &= \frac{1}{B}\sum_{k=1}^{A_\Omega} \lambda_k^\Omega (B - \lambda_k^\Omega) + \frac{1}{B}\sum_{k= A_\Omega+1}^\infty \lambda_k^\Omega (B - \lambda_k^\Omega)\\
                &\geq \frac{\lambda_{A_\Omega}^\Omega}{B} \sum_{k=1}^{A_\Omega} (B-\lambda_k^\Omega) + \big(B-\lambda_{A_\Omega}^\Omega\big)\frac{1}{B} \sum_{k = A_\Omega+1}^\infty \lambda_k^\Omega\\
                &= \lambda_{A_\Omega}^\Omega A_\Omega - \frac{\lambda_{A_\Omega}^\Omega}{B} \sum_{k=1}^{A_\Omega} \lambda_k^\Omega + \big(B-\lambda_{A_\Omega}^\Omega \big) \frac{1}{B} \left( \# (\Omega \cap \Lambda)\Vert g \Vert_{L^2}^2 - \sum_{k=1}^{A_\Omega} \lambda_k^\Omega \right)\\
                &= \lambda_{A_\Omega}^\Omega A_\Omega + \# (\Omega \cap \Lambda)\Vert g \Vert_{L^2}^2 \left(1-\frac{\lambda_{A_\Omega}^\Omega}{B} \right) - \sum_{k=1}^{A_\Omega} \lambda_k^\Omega\\
                &= \# (\Omega \cap \Lambda)\Vert g \Vert_{L^2}^2 - \sum_{k=1}^{A_\Omega} \lambda_k^\Omega + \lambda_{A_\Omega}^\Omega \left(A_\Omega - \frac{\# (\Omega \cap \Lambda)\Vert g \Vert_{L^2}^2}{B}\right)\\
                &\geq \# (\Omega \cap \Lambda)\Vert g \Vert_{L^2}^2 - \sum_{k=1}^{A_\Omega} \lambda_k^\Omega.
		\end{align*}
		Combining the above with \eqref{eq:trace_difference} yields
		\begin{align*}
                \# (\Omega \cap \Lambda)\Vert g \Vert_{L^2}^2 - \sum_{k=1}^{A_\Omega} \lambda_k^\Omega &\leq C\Vert g \Vert_{L^2}^2 \left(\Vert g \Vert_{M_\Lambda^*}^2+ 1\right) \# \partial^{r_\Lambda}_\Lambda \Omega + \frac{B-A}{B}\# (\Omega \cap \Lambda)\Vert g \Vert_{L^2}^2.
		\end{align*}
		We can use this estimate together with Lemma \ref{lemma:l1_eigenval_rewrite} to conclude that
		\begin{align*}
                \big\Vert \rho_\Omega - \chi_\Omega \big\Vert_{\ell^1(\Lambda)} &\leq \frac{2}{\Vert g \Vert_{L^2}^2}\left( \# (\Omega \cap \Lambda)\Vert g \Vert_{L^2}^2 - \sum_{k=1}^{A_\Omega} \lambda_k^\Omega \right) + \frac{B}{\Vert g \Vert_{L^2}^2}\\
                &\leq 2C\left(\Vert g \Vert_{M_\Lambda^*}^2 + 1\right) \# \partial^{r_\Lambda}_\Lambda \Omega + 2\frac{B-A}{B}\# (\Omega \cap \Lambda) + \frac{B}{\Vert g \Vert_{L^2}^2}
		\end{align*}
		which, after absorbing the factor 2, is what we wished to show.
	\end{proof}
	
	\begin{remark}
            The above proof can be extended to apply to multi-window Gabor multipliers \cite{Dorfler2009, Luef2019} or, more generally, mixed-state Gabor multipliers \cite{Skrettingland2020, Luef2019, Feichtinger2024} in the same way as was done in \cite{Luef2019_acc} as Lemma \ref{lemma:sum_of_eigs} and Lemma \ref{lemma:sum_to_boundary_estimate} both are valid in this setting. In the notation of \cite{Feichtinger2024}, this means that if $S$ is a trace-class operator such that $(S, \Lambda)$ is a mixed-state Gabor frame and $G_{\Omega, \Lambda}^S = \sum_{k=1}^\infty \lambda_k^\Omega (h_k^\Omega \otimes h_k^\Omega)$, we can set $\rho_\Omega^S = \frac{1}{\tr(S)}\sum_{k=1}^{A_\Omega} Q_S(h_k^\Omega)$ with $A_\Omega = \big\lceil \frac{\# (\Omega \cap \Lambda) \tr(S)}{B} \big\rceil$ and it will hold that
		\begin{align*}
                \Vert \rho_\Omega^S - \chi_\Omega \Vert_{\ell^1(\Lambda)} &\leq C\left(\sum_{\lambda \in \Lambda} |\lambda| \tilde{S}(\lambda) + 1 \right) \# \partial^{r_\Lambda}_\Lambda \Omega + 2\frac{B-A}{B}\# (\Omega \cap \Lambda) + \frac{B}{\tr(S)}
		\end{align*}
            for the same constant $C$. This type of generalization is analogous to that for the accumulated Cohen's class in \cite{Luef2019_acc}.
	\end{remark}
	
	\begin{remark}
            Parallel to the development of accumulated spectrograms, there have been corresponding results for the eigenvalues and eigenfunctions of \emph{wavelet} localization operators, see \cite{Wong2002, Abreu2012, Halvdansson2023, Ghobber2017, DEMARI2002}. It is likely that results similar to Theorem \ref{theorem:main_l1} hold in the frame setting for accumulated scalograms but we make no attempts to prove this here.
	\end{remark}

        Theorem \ref{theorem:main_l1} can be used to approximate $\Omega$ as the set where $\rho_\Omega > 1/2$. The task of approximately inverting the mapping $\Omega \mapsto G_{\Omega, \Lambda}^g$ has been studied elsewhere in the continuous setting \cite{Abreu2012, Abreu2015, Halvdansson2023_symbol, Romero2024}, but not in the discrete case.
        \begin{repcorollary}{cor:level}
            Let $g \in M_\Lambda^*(\R^d)$ and $\Lambda$ be such that $(g, \Lambda)$ induces a frame with frame constants $A, B > 0$, $r > 0$, $\Omega \subset \R^{2d}$ be compact and
		\begin{align*}
		      \Tilde{\Omega} = \big\{ \lambda \in \Lambda : \rho_\Omega(\lambda) > 1/2 \big\}.
		\end{align*}
		Then there exists a constant $C$ dependent only on $r$ and $d$ such that
		\begin{align*}
                \#\big( (\Omega \Delta \Tilde{\Omega}) \cap \Lambda \big) \leq C\left(\Vert g \Vert_{M_\Lambda^*}^2 + 1\right) \# \partial^{r_\Lambda}_\Lambda \Omega + 4\frac{B-A}{B} \# (\Omega \cap \Lambda) + \frac{2B}{\Vert g \Vert_{L^2}^2}
		\end{align*}
            where $\Delta$ denotes the symmetric difference of two sets, $r_\Lambda = r + l_M$ and $l_M$ is the diameter of the fundamental domain of $\Lambda$.
	\end{repcorollary}
        \begin{proof}
            Define $E = \{ \lambda \in \Lambda : |\rho_\Omega(\lambda) - \chi_\Omega(\lambda)| \geq 1/2\}$, then we can bound the cardinality of $E$ using Chebyshev's inequality as
            \begin{align*}
                \# E &= \# \big\{ \lambda \in \Lambda : |\rho_\Omega(\lambda) - \chi_\Omega(\lambda)| \geq 1/2 \big\} \leq 2\Vert \rho_\Omega - \chi_\Omega \Vert_{\ell^1(\Lambda)}\\
                &\leq 2C\left(\Vert g \Vert_{M_\Lambda^*}^2 + 1\right) \# \partial^{r_\Lambda}_\Lambda \Omega + 4\frac{B-A}{B} \# (\Omega \cap \Lambda) + \frac{2B}{\Vert g \Vert_{L^2}^2}
            \end{align*}
            using Theorem \ref{theorem:main_l1}. We claim that $(\Omega \Delta \Tilde{\Omega}) \cap \Lambda \subset E$. Indeed, if $\lambda \in \Lambda$ is in $\Omega$ but not $\tilde{\Omega}$, then $\chi_\Omega(\lambda) = 1$ and $\rho_\Omega(\lambda) \leq 1/2$ so $|\rho_\Omega(\lambda) - \chi_\Omega(\lambda)| \geq 1/2$. Meanwhile if $\lambda$ is in $\tilde{\Omega}$ but not $\Omega$, then $\chi_\Omega(\lambda) = 0$ and $\rho_\Omega(\lambda) > 1/2$ so $|\rho_\Omega(\lambda) - \chi_\Omega(\lambda)| > 1/2$. This proves the inclusion from which the result follows.
	\end{proof}

	\subsection{Sharpness of estimate}
        On top of improving the $L^1$ estimate of $\rho_\Omega - \chi_\Omega$, \cite{Abreu2017_sharp} also showed that it is impossible to establish stronger bounds on the $L^1$ distance by bounding the growth from below in the special case of dilated balls. In this section, we do the same by following a similar strategy as that used to prove \cite[Theorem 1.6]{Abreu2017_sharp} specialized to the lattice case. From now on we will assume that the Gabor frame is tight with frame constant $1$ as we can only guarantee that $\Vert \rho_{B(0,R)} - \chi_{B(0,R)} \Vert_{\ell^1(\Lambda)}$ grows slower than the area of $\Omega$ when this is the case. Note that the frame constant being $1$ is not a restriction as one can multiply $g$ by a constant if this is not the case.
	\begin{reptheorem}{theorem:sharpness}
            Let $g \in M_\Lambda^*(\R^d)$ and $\Lambda$ be such $(g, \Lambda)$ induces a tight frame with frame constant $1$ and 
            \begin{align*}
                |V_gg(z)| \leq C (1+|z|)^{-s},\qquad V_gg(\lambda) \neq 0\quad \text{for } \lambda \in \Lambda \cap B(0, r + 3l_M)
            \end{align*}
            for some $C > 0$ and $s > 2d+1$ where $l_M$ is the diameter of the fundamental domain of $\Lambda$. Then there exists constants $C_1, C_2$ only dependent on $g$, the lattice $\Lambda$ and the radius $r$ such that
		\begin{align*}
                C_1 \# \partial^{r_\Lambda}_\Lambda B(0, R) \leq \Vert \rho_{B(0,R)} - \chi_{B(0,R)} \Vert_{\ell^1(\Lambda)} \leq C_2 \# \partial^{r_\Lambda}_\Lambda B(0,R)
		\end{align*}
            for all $R > 0$, where $r_\Lambda = r + l_M$ and $l_M$ is the diameter of the fundamental domain of $\Lambda$.
	\end{reptheorem}
        We will prove the theorem by showing that the size of the plunge region grows at least as fast as the boundary of the balls. The theorem on which the following lemma is based uses a different measure of the size of the boundary so we have to use a geometric argument to show that it is equivalent to the $\partial^r_\Lambda \Omega$ measure that we use.
	\begin{lemma}\label{lemma:plunge_size}
		Let $g \in L^2(\R^d)$ be such that
		\begin{align*}
		|V_gg(z)| \leq C(1+|z|)^{-s}
		\end{align*}
            for some $C > 0$ and $s > 2d+1$. Assume further that there is a positive number $r$ such that
		\begin{align*}
		      V_gg(\lambda) \neq 0\qquad \text{ for } \lambda \in B(0, r+2l_M) \cap \Lambda.
		\end{align*}
		Then there exists a $\delta > 0$ and a constant $c$ such that
		\begin{align*}
		c \# \partial^{r}_\Lambda B(0,R) \leq \#\big\{ k : \delta < \lambda_k^{B(0,R)} < 1-\delta \big\}
		\end{align*}
            for all $R > l_M$.
	\end{lemma}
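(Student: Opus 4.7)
The plan has two main steps: invoke an existing lower bound on the size of the plunge region in terms of some boundary quantity, and then show geometrically that this quantity is comparable to $\#\partial^r_\Lambda B(0,R)$ in the special case of balls.

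For the first step, I would cite a spectral result for Gabor multipliers in the lattice setting (for instance the type of estimate in \cite{Marceca2023}) whose hypotheses match exactly the ones imposed in the lemma: polynomial decay $|V_gg(z)|\lesssim(1+|z|)^{-s}$ with $s>2d+1$ and non-vanishing of $V_gg$ on $\Lambda\cap B(0,r+2l_M)$. Such a theorem produces a $\delta>0$ and a constant $c'$ so that
\begin{align*}
\#\bigl\{k:\delta<\lambda_k^{\Omega}<1-\delta\bigr\}\ \geq\ c'\,\nu_\Lambda(\partial\Omega)
\end{align*}
for some lattice-adapted boundary measure $\nu_\Lambda$. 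The decay assumption is what forces the off-diagonal terms $V_gg(\lambda-\lambda')$ to be summable so that the trace-class estimates for $G_{\Omega,\Lambda}^g(I-G_{\Omega,\Lambda}^g)$ (as in Lemma \ref{lemma:sum_of_eigs}\,(ii) and Lemma \ref{lemma:H_arg}) converge, while the non-vanishing on $B(0,r+2l_M)\cap\Lambda$ supplies, at each boundary lattice point $\lambda_0$, a local family $\{\pi(\lambda)g:\lambda\in\Lambda\cap B(\lambda_0,r+l_M)\}$ that is sufficiently independent to serve as test vectors in a min-max argument producing plunge-region eigenvalues.

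For the second step, I would specialize to $\Omega=B(0,R)$ with $R>l_M$ and compare $\nu_\Lambda(\partial B(0,R))$ with $\#\partial^r_\Lambda B(0,R)=\#\{\lambda\in\Lambda:\bigl||\lambda|-R\bigr|<r\}$. Both quantities count lattice points in an annular shell around the sphere $\{|z|=R\}$, possibly with different widths, and a direct lattice-counting argument — in the spirit of the one used to prove Proposition \ref{prop:only_d1_works} — shows that any two such counts grow like $R^{2d-1}$ with constants depending only on the lattice geometry and the two widths. In particular, they are comparable uniformly in $R>l_M$, and chaining this comparison with the cited lower bound yields the statement of the lemma.

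The main obstacle is exactly the translation between $\nu_\Lambda$ and $\#\partial^r_\Lambda$ with constants that are uniform in $R$. The hypothesis $R>l_M$ is doing essential work here: for smaller $R$ the annular shell may degenerate (e.g.\ fail to contain any lattice point at all), and the ratio $\nu_\Lambda(\partial B(0,R))/\#\partial^r_\Lambda B(0,R)$ could blow up. Once $R$ exceeds the diameter of the fundamental domain, standard equidistribution of lattice points in annular shells of fixed width stabilises both quantities at rate $R^{2d-1}$, and the constants $c,\delta$ promised by the lemma can be traced back to the cited plunge-region estimate together with the geometric comparison constant.
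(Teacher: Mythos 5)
Your overall architecture coincides with the paper's: quote an external lower bound on the plunge region for lattice Gabor multipliers, then compare its boundary measure with $\#\partial^r_\Lambda B(0,R)$ geometrically. But as written there is a genuine gap in the first step: you postulate a theorem ``whose hypotheses match exactly the ones imposed in the lemma'' without identifying it, and the reference you suggest does not contain it --- results of the \cite{Marceca2023} type give \emph{upper} bounds on the number of intermediate eigenvalues (and in the continuous setting), whereas what is needed here is a \emph{lower} bound in the lattice setting. That lower bound is precisely \cite[Theorem 5.5.3]{Feichtinger2003}; the decay $s>2d+1$ and the non-vanishing of $V_gg$ on $\Lambda\cap B(0,r+2l_M)$ appear in the lemma exactly because they are that theorem's hypotheses. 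Moreover, applying it is not automatic: it only covers families of sets satisfying the side condition \eqref{eq:Sk_ineq} relating the shells $S^k_{\Omega_\Lambda}$ to the intrinsic boundary $\partial^r\Omega_\Lambda$, which must be checked for dilated balls (this is \cite[Remark 5.5.4 (ii)]{Feichtinger2003}); your sketch never mentions any such condition, and your min-max heuristic with local test vectors is not a proof of the missing input. Since the existence of this lower bound is the entire non-elementary content of the lemma, asserting it is assuming what is to be shown.

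The second step also deviates from what is actually needed. The conclusion only requires the one-sided comparison $\#\partial^{r}_\Lambda B(0,R)\le \#\partial^{\,r+2l_M}\big(B(0,R)\cap\Lambda\big)$, and this is where the hypothesis $R>l_M$ enters: given $\lambda\in\Lambda$ within distance $r$ of a point $\omega\in\partial B(0,R)$, one radially shifts $\omega$ by $l_M$ to the opposite side of the sphere and uses that the shifted ball of radius $l_M$ (still entirely on that side, since $R>l_M$) contains a lattice point; hence $\lambda$ lies in the intrinsic boundary with radius $r+2l_M$. Your proposed route instead claims two-sided comparability via ``standard equidistribution'' of lattice points in shells, uniformly in $R>l_M$. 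Besides being unproved, this is fragile in one direction: lower bounds on the number of lattice points within a distance $r$ of a sphere, when $r$ may be much smaller than the lattice parameters, are a genuinely delicate counting statement, and the intended argument avoids it entirely by the inclusion above. If you replace your step one by the correct citation (with the verification of \eqref{eq:Sk_ineq} for balls) and your step two by the direct inclusion argument, you recover the paper's proof.
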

        The proof of the lemma essentially boils down to translating \cite[Theorem 5.5.3]{Feichtinger2003} to our case where we measure the size of the boundary by $\# \partial^r_{\Lambda} \Omega$. For the readers convenience, we repeat a version of it here.	
        \begin{theorem}[{\cite[Theorem 5.5.3]{Feichtinger2003}}]
            Let $g \in L^2(\R^d)$ be such that
		\begin{align*}
		|V_gg(z)| \leq C(1+|z|)^{-s}
		\end{align*}
		for some $C > 0$ and $s > 2d+1$. Assume further that there is a positive number $r$ such that the fundamental domain of $\Lambda$ is contained in $B(0, r)$ and
		\begin{align*}
		      V_gg(\lambda) \neq 0\qquad \text{ for } \lambda \in B(0, r) \cap \Lambda.
		\end{align*}
            If $\mathcal{O}$ be a family of finite subsets $\Omega_\Lambda \subset \Lambda$ satisfying
            \begin{align}\label{eq:Sk_ineq}
                \# S_{\Omega_\Lambda}^k \leq C \# \partial^r \Omega_\Lambda,
            \end{align}
            for some $C > 0$, where
            \begin{align*}
                \partial^r \Omega_\Lambda = \big\{ \lambda \in \Omega_\Lambda : B(\lambda, r) \cap \Omega_\Lambda^c \neq \emptyset \big\} \cup \big\{ \lambda \in \Omega_\Lambda^c : B(\lambda, r) \cap \Omega_\Lambda \neq \emptyset \big\}
            \end{align*}
            and
		\begin{align*}
		S_{\Omega_\Lambda}^k = \big\{ \lambda \in \Omega_\Lambda : k \leq d(\lambda, \Omega_\Lambda^c) < k+1 \big\},
		\end{align*}
            then for any $\delta > 0$ sufficiently close to $0$, there is a positive constant $c$ such that for all $\Omega_\Lambda \in \mathcal{O}$,
            \begin{align*}
                c \#\partial^r \Omega_\Lambda \leq \# \{ k : \delta < \lambda_k^\Omega < 1-\delta \}.
            \end{align*}
        \end{theorem}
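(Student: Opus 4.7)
The plan is to follow the Landau--Widom spectral localization strategy, which since this statement is cited from \cite{Feichtinger2003} rather than proved in the present article, has to be reconstructed. Write $N_{(\delta, 1-\delta)} = \#\{k : \lambda_k^{\Omega_\Lambda} > \delta\} - \#\{k : \lambda_k^{\Omega_\Lambda} \geq 1-\delta\}$ and aim to bound the first count from below and the second from above so that the difference is at least $c \# \partial^r \Omega_\Lambda$. Both one-sided bounds will be produced by Courant--Fischer after constructing appropriate subspaces of $\operatorname{range}(G_{\Omega_\Lambda, \Lambda}^g)$.

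For the lower bound on $\#\{\lambda_k > \delta\}$, fix a radius $r_0$ and consider $V_{\text{in}} = \operatorname{span}\{\pi(\lambda)g : \lambda \in \Omega_\Lambda,\, d(\lambda, \Omega_\Lambda^c) > r_0\}$. For $v = \sum a_\lambda \pi(\lambda)g \in V_{\text{in}}$, the tight-frame identity gives $\|v\|^2 - \langle G_{\Omega_\Lambda, \Lambda}^g v, v\rangle = \sum_{\mu \notin \Omega_\Lambda} |V_g v(\mu)|^2$, and every entry of the matrix $(V_gg(\mu - \lambda))_{\mu \notin \Omega_\Lambda,\, d(\lambda, \Omega_\Lambda^c) > r_0}$ is evaluated at distance exceeding $r_0$; a Schur test exploiting $s > 2d$ bounds its operator norm by $C r_0^{-(s-2d)}$. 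Choosing $r_0$ (depending only on $g, s, \delta$) so that the resulting exterior-energy ratio is below $1 - \delta$ forces every Rayleigh quotient on $V_{\text{in}}$ to exceed $\delta$. The hypothesis $\# S_{\Omega_\Lambda}^k \leq C \# \partial^r \Omega_\Lambda$ limits the number of shallow points by $r_0 C \# \partial^r \Omega_\Lambda$, giving $\#\{\lambda_k > \delta\} \geq \dim V_{\text{in}} \geq \# \Omega_\Lambda - r_0 C \# \partial^r \Omega_\Lambda$.

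For the upper bound on $\#\{\lambda_k \geq 1-\delta\}$, construct boundary test vectors: for $\lambda \in \partial^r \Omega_\Lambda \cap \Omega_\Lambda$ set $u_\lambda = \pi(\lambda) g$, and for $\mu \in \partial^r \Omega_\Lambda \cap \Omega_\Lambda^c$ set $u_\mu = P \pi(\mu) g$ with $P$ the orthogonal projection onto $\operatorname{range}(G_{\Omega_\Lambda, \Lambda}^g)$. By definition of $\partial^r \Omega_\Lambda$ each such $u$ has a lattice neighbour on the opposite side of the boundary at distance $\leq r + 2 l_M$, and the nonvanishing assumption $V_gg(\lambda) \neq 0$ on $B(0, r+2l_M) \cap \Lambda$ forces the exterior Gabor energy of $u$ to be at least $c_3 \|u\|^2$, yielding Rayleigh quotient $\leq 1 - c_3$ for each individual $u$. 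A Gram-matrix Schur test, using the decay $|V_gg(z)| \leq C(1+|z|)^{-s}$ and the lattice separation, both establishes linear independence of the family $\{u\}$ (so $\dim V_{\text{bd}} \geq c' \# \partial^r \Omega_\Lambda$) and propagates the Rayleigh bound to all of $V_{\text{bd}} = \operatorname{span}\{u\}$, giving Rayleigh quotient $\leq 1 - \delta_0$ there for a constant $\delta_0 > 0$ depending on $g, \Lambda, r$ only. For $\delta \leq \delta_0$ we conclude $\#\{\lambda_k \geq 1-\delta\} \leq \dim \operatorname{range}(G_{\Omega_\Lambda, \Lambda}^g) - \dim V_{\text{bd}} \leq \# \Omega_\Lambda - c' \# \partial^r \Omega_\Lambda$.

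Subtracting the two estimates yields $N_{(\delta, 1-\delta)} \geq (c' - r_0 C) \# \partial^r \Omega_\Lambda$, which is positive once $\delta$ (and thereby $r_0$) is fixed so that $r_0 C < c'$; this is possible because $r_0$ grows only logarithmically (in fact polynomially) in $1/\delta$ through $r_0^{s-2d} \sim 1/(1-\delta)$, while $c', C$ are absolute constants determined by $g, \Lambda, r$. The main obstacle is the Schur/Gram-matrix control in the second step: the subtle point is to propagate the exterior-energy lower bound from each single $u$ to arbitrary linear combinations while simultaneously keeping $\dim V_{\text{bd}}$ proportional to $\# \partial^r \Omega_\Lambda$ with constants that are uniform across the family $\mathcal{O}$. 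The decay threshold $s > 2d+1$ in the hypothesis is exactly what is needed to ensure that the relevant off-diagonal contributions are summable on $\Lambda$, and the $\# S_{\Omega_\Lambda}^k$ condition feeds into Step 1 to guarantee that the bulk of $\Omega_\Lambda$ consists of deep points.
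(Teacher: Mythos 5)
You should first be aware that the paper does not prove this statement: it is quoted verbatim from \cite{Feichtinger2003} purely for the reader's convenience, and the paper explicitly remarks that the proof of the results in Section 5.5 of that reference was announced to ``appear elsewhere'' and has never been published. So there is no in-paper argument to compare against, and your reconstruction has to stand entirely on its own. As written, it does not, for two concrete reasons.

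First, the bookkeeping does not close. Your Step 1 yields $\#\{k:\lambda_k>\delta\}\geq \#\Omega_\Lambda-(r_0+1)C\,\#\partial^r\Omega_\Lambda$, where $C$ is the constant from the hypothesis $\# S_{\Omega_\Lambda}^k\leq C\,\#\partial^r\Omega_\Lambda$, while Step 2 yields $\#\{k:\lambda_k\geq 1-\delta\}\leq \#\Omega_\Lambda-c'\,\#\partial^r\Omega_\Lambda$ with necessarily $c'\leq 1$, since you cannot produce more than one test vector per boundary point. Subtracting leaves the coefficient $c'-(r_0+1)C$, which is negative whenever $C\geq 1$, and nothing in the hypotheses prevents that: $C$ is given, not chosen. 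Your closing remark addresses only the size of $r_0$ as $\delta\to 0$, which is not the obstruction. Second, both Courant--Fischer steps rest on unproved quantitative almost-orthogonality. In Step 1, passing from the Schur bound on the cross-Gramian $\big(V_gg(\mu-\lambda)\big)_{\mu\notin\Omega_\Lambda,\,\lambda\ \text{deep}}$ to a bound on the Rayleigh quotient of $v=\sum a_\lambda\pi(\lambda)g$ requires a lower Riesz estimate $\|v\|^2\gtrsim\sum|a_\lambda|^2$ that is uniform over the family $\mathcal{O}$; for a redundant Gabor frame no such uniform bound exists (finite subsystems are linearly independent by Linnell's theorem, but their lower Riesz constants degenerate as $\Omega_\Lambda$ grows), so the step fails exactly in the setting where the theorem is applied. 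In Step 2, upgrading $\langle(I-G_{\Omega_\Lambda,\Lambda}^g)u,u\rangle\geq c_3\|u\|^2$ from each individual boundary vector to all of $\operatorname{span}\{u\}$ is precisely the hard content of the theorem: adjacent boundary vectors sit at fixed lattice distance, their Gram matrix is far from diagonal, and the off-diagonal entries $\langle(I-G_{\Omega_\Lambda,\Lambda}^g)u_j,u_i\rangle$ have no reason to be small relative to the diagonal lower bound $c_3$. Asserting that ``a Gram-matrix Schur test'' handles this simultaneously with the dimension count is not a proof; it is the missing argument whose absence from the literature the paper itself points out.
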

        \begin{proof}[Proof of Lemma \ref{lemma:plunge_size}]    
            Ultimately our goal is to apply the above theorem to the collection of balls $(B(0,R))_{R > 0}$ although due to the difference in boundary measures we cannot apply it directly. We relate them by showing that
            \begin{align*}
                \# \partial^{r}_\Lambda B(0,R) \leq \# \partial^{r+2l_M} (B(0,R) \cap \Lambda).
            \end{align*}
            Indeed, if $\lambda \in \partial^{r}_\Lambda B(0,R)$ we know that $\lambda$ is within distance $r$ to a point $\omega \in \partial \Omega$. If $\lambda \in B(0,R)$, let $\omega_c = \frac{\omega}{|\omega|} (R + l_M)$ and if $\lambda \in B(0,R)^c$, let $\omega_c = \frac{\omega}{|\omega|} (R - l_M)$. Then since $R > l_M$ by assumption, $\omega_c$ is a point on the other side of $\partial B(0,R)$. Now $B(\omega_c, l_M)$ is fully contained in the opposite side of $\partial B(0,R)$ again by $R > l_M$ and since $B(\omega_c, l_M)$ has radius $l_M$, it must contain a lattice point $\lambda_e$. By the triangle inequality,
            \begin{align*}
                d(\lambda, \lambda_e) \leq d(\lambda, \omega) + d(\omega, \omega_c) + d(\omega_c, \lambda_e) \leq r + 2l_M
            \end{align*}
            and so we conclude that $\lambda \in \partial^{r + 2l_M} B(0, R)$.

            From \cite[Remark 5.5.4 (ii)]{Feichtinger2003} we know that \eqref{eq:Sk_ineq} is satisfied for dilated balls so we can apply the theorem with the radius $r + 2l_M$ since the theorem assumptions have been included in the assumptions of the lemma.
	\end{proof}

        \begin{remark}
            In \cite{Feichtinger2003} it is stated that the proofs of the results of \cite[Section 5.5]{Feichtinger2003}, including \cite[Theorem 5.5.3]{Feichtinger2003}, ``will appear elsewhere'' but this has not yet appeared.
        \end{remark}
 
	With the above lemma in place, we are ready to proceed with the proof of the theorem.
	\begin{proof}[Proof of Theorem \ref{theorem:sharpness}]
            Using Theorem \ref{theorem:main_l1} with $A=B = 1$ we get the upper bound with an additional term $\frac{1}{\Vert g \Vert_{L^2}^2}$. Since the theorem is trivially true if $R = 0$, we can assume that $\# \partial_\Lambda^r B(0,R) \geq 1$ and this means we can absorb the additional term in the $C_2$ constant and thus it only remains to prove the lower bound.
            
            From the proof of Lemma \ref{lemma:l1_eigenval_rewrite} we have that
		\begin{align*}
                \Vert \rho_\Omega - \chi_\Omega \Vert_{\ell^1(\Lambda)} &= \frac{1}{\Vert g \Vert_{L^2}^2} \left( A_\Omega - \sum_{k=1}^{A_\Omega} \lambda_k^\Omega + \# (\Omega \cap \Lambda)\Vert g \Vert_{L^2}^2 - \sum_{k=1}^{A_\Omega} \lambda_k^\Omega \right)\\
                &= \frac{1}{\Vert g \Vert_{L^2}^2} \left( A_\Omega - \sum_{k=1}^{A_\Omega} \lambda_k^\Omega + \sum_{k=1}^\infty \lambda_k^\Omega - \sum_{k=1}^{A_\Omega} \lambda_k^\Omega \right)
		\end{align*}
            where we used that the frame is tight with frame constant $1$ which yields equality in \eqref{eq:non_tight_ineq} for the first equality and Lemma \ref{lemma:sum_of_eigs} for the last equality. This can be bounded using purely eigenvalues as
		\begin{align}\nonumber
                A_\Omega - \sum_{k=1}^{A_\Omega} \lambda_k^\Omega + \sum_{k=1}^\infty \lambda_k^\Omega - \sum_{k=1}^{A_\Omega} \lambda_k^\Omega &= \sum_{k=1}^{A_\Omega} (1 - \lambda_k^\Omega) + \sum_{k=A_\Omega+1}^\infty \lambda_k^\Omega\\\label{eq:eigen_sum_l1_bound}
		      &\geq\sum_{k=1}^\infty \lambda_k^\Omega(1 - \lambda_k^\Omega).
		\end{align}
		Meanwhile, by applying Lemma \ref{lemma:plunge_size} with radius $r_\Lambda$ we know that there exists a $\delta > 0$ and $c > 0$ such that
		\begin{align*}
		c \# \partial^{r_\Lambda}_\Lambda B(0,R) \leq \#\big\{ k : \delta < \lambda_k^{B(0, R)} < 1-\delta \big\}
		\end{align*}
		for all $R > l_M$. Letting $P \subset \N$ denote the indices $k$ such that $\delta < \lambda_k^{B(0, R)} < 1-\delta$ for this $\delta$, we have that 
		\begin{align*}
		\sum_{k=1}^\infty \lambda_k^{B(0, R)}\big(1 - \lambda_k^{B(0, R)}\big) &\geq \sum_{k \in P} \lambda_k^{B(0, R)}\big(1 - \lambda_k^{B(0, R)}\big)\\
		&\geq \delta^2 |P| \geq \delta^2 c \# \partial^{r_\Lambda}_\Lambda B(0,R).
		\end{align*}
            Plugging this into \eqref{eq:eigen_sum_l1_bound}, we can relate it to $\Vert \rho_\Omega - \chi_\Omega \Vert_{\ell^1(\Lambda)}$ with $\Omega = B(0,R)$ to get
		\begin{align*}
                \Vert \rho_{B(0,R)} - \chi_{B(0,R)} \Vert_{\ell^1(\Lambda)} \geq \frac{1}{\Vert g \Vert_{L^2}^2}\delta^2 |P| \geq \delta^2 c \# \partial^{r_\Lambda}_\Lambda B(0,R)
		\end{align*}
		which finishes the proof in the $R > l_M$ case.

            The $R \leq l_M$ case can be treated manually by first noting that $\# \partial_\Lambda^{r_\Lambda} B(0,R)$ can be bounded from above by $\#\big( \Lambda \cap B(0, R + r_\lambda) \big)$. Meanwhile $\Vert \rho_{B(0,R)} - \chi_{B(0,R)} \Vert_{\ell^1(\Lambda)}$ can be bounded from below as follows. From the beginning of this proof, we know that we can crudely bound
            \begin{align*}
                \Vert \rho_\Omega - \chi_\Omega \Vert_{\ell^1(\Lambda)} = \sum_{k=1}^{A_\Omega} (1 - \lambda_k^\Omega) + \sum_{k=A_\Omega+1}^\infty \lambda_k^\Omega \geq 1-\lambda_1^{B(0,R)} \geq 1-\lambda_1^{B(0, l_M)} > 0
            \end{align*}
            where we in the last step used the general eigenvalue bound from \eqref{eq:eigenvalue_bounds}. Then by choosing $C_1 = \frac{1-\lambda_1^{B(0,l_M)}}{\#( \Lambda \cap B(0, R + r_\lambda) )}$, we get that
            \begin{align*}
                C_1 \# \partial_\Lambda^{r_\Lambda} B(0, R) &\leq C_1 \#\big( \Lambda \cap B(0, R + r_\lambda) \big)\\
                &= 1-\lambda_1^{B(0,l_M)} \leq \Vert \rho_{B(0,R)} - \chi_{B(0,R)} \Vert_{\ell^1(\Lambda)}
            \end{align*}
            finishing the proof.
	\end{proof}
	Note that if we choose $g$ to be the standard Gaussian, the conditions of Lemma \ref{lemma:plunge_size} are fulfilled so the above theorem is not vacuous.
 
	\section{Hyperuniformity of Weyl-Heisenberg ensembles on lattices}\label{sec:wh_ensemble}
	As we saw in Section \ref{sec:prelim_rkhs}, the image space $V_g(L^2) \subset \ell^2(\Lambda)$ is a reproducing kernel Hilbert space. The corresponding RKHS for the continuous STFT was used to induce a determinantal point process on $\R^{2d}$ in \cite{Abreu2017, Abreu2019, Abreu2023}, the Weyl-Heisenberg ensemble, and by the same procedure we can induce a determinantal point process $\mathcal{X}_g$ on $\Lambda$ with $k$-point intensities
	\begin{align*}
	\rho_k(\lambda_1, \dots, \lambda_k) = \det\big( [K_g(\lambda_i, \lambda_j)]_{1 \leq i,j \leq k} \big)
	\end{align*}
	where $K_g$ is the reproducing kernel $K_g(\lambda, \lambda') = \langle \pi(\lambda')g, \pi(\lambda)g \rangle$ from \eqref{eq:lattice_gabor_rk} \cite{Feichtinger2014, GAF_book}.
	
	A point process $\mathcal{X}$ is said to be \emph{hyperuniform} if the variance of the number of points in a large ball grows slower than the volume \cite{Torquato2003, Torquato2018}. Letting $\mathcal{X}(\Omega)$ denote the points in a set $\Omega$, this means that
	\begin{align}\label{eq:hyperuniform_def_growth}
	   \mathbb{V}\big[ \mathcal{X}(B(0,R)) \big] = o(R^{2d}).
	\end{align}
	In particular, when the growth is on the order of $R^{2d-1}$, the point process is said to be \emph{class I} hyperuniform.
	
	A standard formula (see e.g. \cite[Proposition 1.E.1]{Gautier2020}) specialized to the case of a lattice $\Lambda$ states that for a determinantal point process with correlation kernel $K$,
	\begin{align}\label{eq:variance_calc}
	\mathbb{V}\left[ \sum_{x \in \mathcal{X}} f(x) \right] = \sum_{\lambda \in \Lambda} f(\lambda)^2 K(\lambda, \lambda) - \sum_{\lambda \in \Lambda} \sum_{\lambda' \in \Lambda} f(\lambda) f(\lambda') K(\lambda, \lambda') K(\lambda', \lambda).
	\end{align}
	We will use this formula to show that our determinantal point process $\mathcal{X}_g$ on $\Lambda$, induced by a tight Gabor frame, is class I hyperuniform.
	\begin{reptheorem}{theorem:hyperuniformity}
            Let $g \in M^*_\Lambda(\R^d)$ and $\Lambda$ be such that $(g, \Lambda)$ induces a tight frame, then the determinantal point process $\mathcal{X}_g$ on $\Lambda$ with correlation kernel $K_g(\lambda, \lambda') = \langle \pi(\lambda') g, \pi(\lambda)g \rangle$ is hyperuniform.
	\end{reptheorem}
	\begin{proof}
            We can assume that the frame constant is $1$ without loss of generality by multiplying $g$ by a constant if necessary. With $K_g(\lambda, \lambda') = \langle \pi(\lambda')g, \pi(\lambda)g \rangle$, we have that $K_g(\lambda, \lambda) = \Vert g \Vert_{L^2}^2$ and $K_g(\lambda, \lambda') = \overline{K_g(\lambda', \lambda)}$. Hence, with $\phi(\lambda-\lambda') = |K(\lambda, \lambda')|^2$, we can write \eqref{eq:variance_calc} as
		\begin{align*}
                \mathbb{V}\big[ \mathcal{X}_g(\Omega) \big] = \sum_{\lambda \in \Lambda} \Vert g \Vert_{L^2}^2 \chi_\Omega(\lambda) - \sum_{\lambda \in \Lambda} \sum_{\lambda' \in \Lambda} \chi_\Omega(\lambda) \chi_\Omega(\lambda') \phi(\lambda - \lambda').
		\end{align*}
            From here Lemma \ref{lemma:sum_to_boundary_estimate} applies and bounds the variance by $C \Vert g \Vert_{L^2}^2 (\Vert g \Vert_{M^*_\Lambda}^2 + 1) \# \partial^{r_\Lambda}_\Lambda \Omega$. We claim that this implies the desired growth behavior \eqref{eq:hyperuniform_def_growth} when specialized to the case $\Omega = B(0,R)$. Indeed, we can write
            \begin{align*}
                \# \partial^{r_\Lambda}_\Lambda B(0, R) = \# \big( \Lambda \cap (B(0, R+r_\Lambda) \setminus B(0, R-r_\Lambda))\big)
            \end{align*}
            whenever $R > r_\Lambda$ and this cardinality can be bounded in the same way as was done in Proposition \ref{prop:only_d1_works}. Let $A$ denote the annulus $B(0, R+r_\Lambda) \setminus B(0, R-r_\Lambda)$ and cover $A$ by a collection $Q$ of hypercubes of side length $l_m/\sqrt{2d}$ where $l_m$ is the smallest distance between two points in $\Lambda$. Then each hypercube contains at most one lattice point so $\#(\Lambda \cap A) \leq \# Q$. Meanwhile, the hypercubes can be contained in a larger annulus so by comparing volumes we find
            \begin{align*}
                \bigcup_{q \in Q} q \subset A + B(0, l_m) &\implies \# Q \left(\frac{l_m}{\sqrt{2d}}\right)^{2d} \leq \frac{\pi^d}{d!} \big( (R + r_\Lambda + l_m)^{2d} - (R - r_\Lambda - l_m)^{2d} \big)\\
                &\implies \#(\Lambda \cap A) \leq \# Q = O(R^{2d-1})
            \end{align*}
            which is what we wished to show.
	\end{proof}
        
        \subsection*{Acknowledgments}
        The author appreciates the reviewers’ detailed and thoughtful comments, which led to significant improvements of the manuscript.
        
        This project was partially supported by the Project Pure Mathematics in Norway, funded by Trond Mohn Foundation and Tromsø Research Foundation.
	
	\printbibliography
	
\end{document}